\documentclass[12pt,amstex]{amsart}
\usepackage{stmaryrd}

\usepackage{epsfig}
\usepackage{amsmath}
\usepackage{amssymb}
\usepackage{amscd}
\usepackage{graphicx}

\usepackage{pstricks}

\usepackage{tocvsec2}

\usepackage{hyperref}

\topmargin=0pt \oddsidemargin=0pt \evensidemargin=0pt
\textwidth=15cm \textheight=22.5cm \raggedbottom

\input xy
\xyoption{all}

\newtheorem{thm}{Theorem}[section]
\newtheorem{lem}[thm]{Lemma}

\newtheorem{prop}[thm]{Proposition}
\newtheorem{ques}[thm]{Question}
\newtheorem{cor}[thm]{Corollary}
\newtheorem{conj}[thm]{Cojecture}
\theoremstyle{definition}
\newtheorem{de}[thm]{Definition}
\newtheorem{exam}[thm]{Example}
\theoremstyle{remark}
\newtheorem{rem}[thm]{Remark}

\numberwithin{equation}{section}

\def \N {\mathbb N}

\def \Z {\mathbb Z}

\def \E {\mathbb E}

\def \O {\mathcal{O}}

\def \F {\mathcal F}
\def \G {\mathcal{G}}

\def \X {\mathcal{X}}
\def \Y {\mathcal{Y}}

\def \Q {{\bf Q}}
\def \RP {{\bf RP}}

\def \id {{\rm id}}

\def \a {\alpha }

\def \ep {\epsilon}
\def \d {\delta}
\def \D {\Delta}

\def \T {\mathbb T}

\begin{document}
\title{Topological characteristic factors along cubes of minimal systems}

\author{Fangzhou Cai}
\author{Song Shao}

\address{Wu Wen-Tsun Key Laboratory of Mathematics, USTC, Chinese Academy of Sciences and
Department of Mathematics, University of Science and Technology of China,
Hefei, Anhui, 230026, P.R. China.}

\email{cfz@mail.ustc.edu.cn}
\email{songshao@ustc.edu.cn}

\subjclass[2000]{Primary: 37B05; 54H20}

\thanks{This research is supported by NNSF of China (11571335, 11431012 ) and by ¡°the Fundamental Research Funds for the Central Universities¡±.}

\date{}

\begin{abstract}
In this paper we study the topological characteristic factors along cubes of minimal systems. It is shown that up to proximal extensions the pro-nilfactors are the topological characteristic factors along cubes of minimal systems. In particular, for a distal minimal system, the maximal $(d-1)$-step pro-nilfactor is the topological cubic
characteristic factor of order $d$.
\end{abstract}

\maketitle





\section{Introduction}

This paper is motivated by the work of Glasner on topological characteristic factors in topological dynamics \cite{G94} and the work of Host and Kra on the multiple ergodic averages \cite{HK05}. In \cite{G94}, Glasner studied the topological characteristic factors along arithmetic progressions, and his work is the counterpart of Furstenberg's work \cite{F77} in topological dynamics. The present work is dedicated to the topological characteristic factors along cubes, which may be considered as the counterpart of \cite{HK05} in topological dynamics.

\subsection{Characteristic factors in ergodic theory}\
\medskip

The connection between ergodic theory and additive combinatorics was
built in the 1970's with Furstenberg's beautiful proof of
Szemer\'edi's theorem via ergodic theory \cite{F77}. Furstenberg \cite{F77} proved
Szemer\'edi's theorem via the following multiple recurrence theorem: let $T$ be
a measure-preserving transformation on the probability space
$(X,\X,\mu)$, then for every integer $d \ge 1$ and $A\in \mathcal{X}$ with positive
measure,
\begin{equation*}
    \liminf_{N\to \infty} \frac{1}{N}\sum_{n=0}^{N-1}
    \mu(A\cap T^{-n}A\cap T^{-2n}A\cap \ldots \cap T^{-dn}A)>0.
\end{equation*}
So it is natural to ask about the convergence of these averages, or
more generally about the convergence in $L^2(X,\mu)$ of the {\em multiple
ergodic averages} (or called {\em non-conventional averages})
$$ \frac 1 N\sum_{n=0}^{N-1}f_1(T^nx)\ldots
f_d(T^{dn}x) ,$$ where $f_1, \ldots , f_d \in L^\infty(X,\mu)$.
After nearly 30 years' efforts of many researchers, this problem was
finally solved in \cite{HK05} (see \cite{Z} for an another proof).

In the study of multiple ergodic averages, the idea of characteristic factors play an very important role. This idea was suggested by Furstenberg in \cite{F77}, and the notion of ``characteristic factors'' was
first introduced in a paper by Furstenberg and Weiss \cite{FW96}.

\begin{de}\cite{FW96}\label{de-charac-mps}
Let $(X,\X,\mu, T)$ be a measurable system and $(Y,\Y,\mu, T)$ be a factor of $X$. Let $\{p_1,\ldots, p_d\}$ be a family of integer valued polynomials, $d\in \N$. We say that $Y$ is a {\em $L^2$(resp. a.e.)-characteristic factor} of $X$ for the scheme $\{p_1,\ldots, p_d\}$ if for all $f_1,\ldots,f_d\in L^\infty(X,\X,\mu)$,
\begin{equation*}
\begin{split}
  \frac{1}{N}\sum_{n=0}^{N-1} & f_1(T^{p_1(n)}x) f_2(T^{p_2(n)}x)\ldots f_d(T^{p_d(n)}x)\\
  -  &\frac{1}{N}\sum_{n=0}^{N-1} \E(f_1|\Y)(T^{p_1(n)}x) \E(f_2|\Y)(T^{p_2(n)}x)\ldots \E(f_d|\Y)(T^{p_d(n)}x)\to 0
\end{split}
\end{equation*}
in $L^2(X,\X,\mu)$ (resp. almost everywhere).
\end{de}

Finding a characteristic factor for
a scheme often gives a reduction of the problem of evaluating limit behavior
of multiple ergodic averages to special systems. The
structure theorem of \cite{HK05, Z} states that for an ergodic system $(X,\X,\mu,T)$ if one wants to
understand the multiple ergodic averages
$$ \frac 1 N\sum_{n=0}^{N-1}f_1(T^nx)\ldots f_d(T^{dn}x) ,$$
one can replace each function $f_i$ by its conditional expectation
on some $d-1$-step pro-nilsystem ($0$-step system is a trivial system and $1$-step pro-nilsystem is the Kroneker's
one). Thus one can reduce the problem to the study of the same
average in a nilsystem, i.e. reducing the average in an arbitrary
system to a more tractable question.

In \cite{HK05}, lots of useful tools, such as dynamical
parallelepipeds, ergodic uniformity seminorms etc., were introduced
in the study of dynamical systems. One of main results of \cite{HK05} is the following theorem of multiple ergodic averages along cubes.

\begin{thm}\cite[Theorem 1.2]{HK05}\label{HK-cube}
Let $(X,\X, \mu, T)$ be an measure preserving probability system, and $d\in \N$. Then
for functions $f_\ep\in L^{\infty}(\mu), \
\ep\in \{0,1\}^d, \ep\not=(0,\ldots,0) $, the averages
\begin{equation}\label{C1}
   \prod_{i=1}^d \frac{1}{N_i-M_i} \cdot \sum_{{\bf n}\in [M_1,N_1)\times \ldots\times [M_d,N_d)} \prod _{(0,\ldots,0)\neq\ep\in \{0,1\}^d}
    f_\ep (T^{{\bf n}\cdot \ep}x)
\end{equation}
converge in $L^2(X)$ as $N_1-M_1, N_2-M_2,\ldots,N_d-M_d$ tend to $+\infty$.
\end{thm}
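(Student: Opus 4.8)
The plan is to place the statement inside the Host--Kra machinery and argue in two stages: first locate a characteristic factor for the averages using the uniformity seminorms, then verify convergence on that factor, which will be a pro-nilsystem. By the ergodic decomposition (plus a routine measurability check on the ergodic components) it is enough to treat an ergodic system $(X,\X,\mu,T)$, and by multilinearity and an $L^2$-density argument we may assume each $f_\ep$ is bounded by $1$. Write $A=A_{{\bf M},{\bf N}}$ for the average in \eqref{C1}.

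\emph{Step 1: the characteristic factor.} Recall the Host--Kra uniformity seminorms $\|\cdot\|_k$ on $L^\infty(\mu)$, given for ergodic $T$ by $\|f\|_1=|\int f\,d\mu|$ and $\|f\|_{k+1}^{2^{k+1}}=\lim_{H\to\infty}\frac1H\sum_{h=1}^{H}\|\overline f\cdot T^hf\|_k^{2^k}$, together with the basic fact that $\|f\|_{k+1}=0$ precisely when $\E(f\mid\mathcal Z_k)=0$, where $\mathcal Z_k$ denotes the maximal $k$-step pro-nilfactor of $(X,\X,\mu,T)$. The key estimate I would prove — by induction on $d$ — is that if $\E(f_{\ep_0}\mid\mathcal Z_{d-1})=0$ for some $\ep_0\neq(0,\dots,0)$, then $\|A\|_{L^2(\mu)}\to 0$ as all $N_i-M_i\to\infty$. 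To carry out the induction, write ${\bf n}=({\bf n}',n_d)$, split the product over $\ep=(\ep',\ep_d)$ according to $\ep_d\in\{0,1\}$, regard the partial sum over ${\bf n}'$ as a bounded sequence of vectors in $L^2(\mu)$ indexed by $n_d$, and apply the van der Corput lemma in $n_d$. After squaring and differencing by $h$, the inner products that appear are multiple averages in $({\bf n}',{\bf m}',n_d)$ which, after a change of variables and a F\o lner-type rearrangement, become $(d-1)$-dimensional cube averages whose vertex functions are the untouched $f_{(\ep',0)}$ together with the differenced functions $f_{(\ep',1)}\cdot\overline{T^hf_{(\ep',1)}}$; by the inductive hypothesis these are controlled by a $(d-1)$-st uniformity seminorm of one differenced function, and averaging over $h$ and feeding this through the recursion defining $\|\cdot\|_d$ closes the induction. (The base case $d=1$ is the von Neumann mean ergodic theorem.) Decomposing each $f_\ep$ as $\E(f_\ep\mid\mathcal Z_{d-1})+f_\ep'$ with $\E(f_\ep'\mid\mathcal Z_{d-1})=0$ and expanding by multilinearity, every term containing at least one $f_\ep'$ tends to $0$ in $L^2(\mu)$ by this estimate; hence it suffices to prove convergence when every $f_\ep$ is $\mathcal Z_{d-1}$-measurable.

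\emph{Step 2: convergence on the pro-nilfactor.} We are reduced to proving the theorem for $X=\mathcal Z_{d-1}$. By the Host--Kra structure theorem $\mathcal Z_{d-1}$ is an inverse limit of $(d-1)$-step nilsystems; a standard approximation (using the uniform bound on the averages) reduces us further to a genuine $(d-1)$-step nilsystem $X=G/\Gamma$ with $T$ a fixed nilrotation, where the $f_\ep$ may be taken continuous. For such a system ${\bf n}\mapsto (T^{{\bf n}\cdot\ep}x)_{\ep\in\{0,1\}^d}$ is a polynomial orbit in the nilmanifold $(G/\Gamma)^{2^d}$, so by Leibman's equidistribution theorem it is uniformly distributed in a closed sub-nilmanifold $Y_x$; consequently $A$ converges, in fact uniformly in $x$, to the integral of $\prod_{\ep\neq 0}f_\ep$ (with $f_\ep$ on the $\ep$-th coordinate) over $Y_x$ against its Haar measure. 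Combining with Step 1 gives $L^2(\mu)$-convergence of \eqref{C1} for every ergodic system, and the general case then follows from the ergodic decomposition.

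The technical heart — and where I expect the real work to lie — is the van der Corput step in Step 1: one must choose the vector-valued averages so that differencing a \emph{single} coordinate reproduces a cube average of exactly one lower order carrying the combinatorially correct family of vertex functions, and, because the theorem demands convergence as each $N_i-M_i$ grows \emph{independently} (not merely along $N_i=N$), one must propagate all the van der Corput error terms with uniformity over arbitrary boxes $[M_i,N_i)$. A secondary point is to make the equidistribution in Step 2 uniform over the finite family of sub-nilmanifolds $Y_x$ that can occur, so that the convergence there is genuinely uniform in the base point.
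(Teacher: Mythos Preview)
The paper does not prove this theorem: it is quoted verbatim from \cite{HK05} as motivation, with the one-line remark that ``the authors in \cite{HK05} showed that the $d$-dimensional average along cubes has the same characteristic factor as the average along arithmetic progressions of length $d-1$, which is a $d-1$-step pro-nilsystem.'' There is therefore no in-paper argument to compare your proposal against.

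That said, your sketch is a faithful outline of the Host--Kra strategy and is essentially correct. A couple of remarks on the details. First, in Host--Kra the cube averages are more tightly bound to the seminorms than your Step~1 suggests: iterated Cauchy--Schwarz (the ``Cauchy--Schwarz--Gowers'' inequality for the measures $\mu^{[d]}$) gives directly that the $L^2$-limsup of the average is bounded by $\prod_{\ep\neq 0}\|f_\ep\|_d$, without needing to run van der Corput coordinate-by-coordinate; your inductive van der Corput scheme would of course reproduce this, but it is the harder road. Second, the passage ``by the ergodic decomposition \dots it is enough to treat an ergodic system'' hides genuine work here, because the limit is taken over independently growing intervals rather than a single F\o lner sequence; Host--Kra handle this by proving the seminorm bound for general (not necessarily ergodic) systems using the relative construction over the invariant $\sigma$-algebra, and you should flag that this is where the uniformity over boxes is absorbed. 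Third, in Step~2 Host--Kra do not invoke Leibman's polynomial equidistribution (which is contemporaneous); they argue directly with the structure of the Host--Kra measures $\mu^{[d]}$ on nilmanifolds. Your route via Leibman is a legitimate alternative and arguably cleaner in hindsight, but it is not the original argument.
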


One may define the characteristic factor of (\ref{C1}) as defined in Definition \ref{de-charac-mps}.
To prove theorem above the authors in \cite{HK05} showed that the $d$-dimensional average along cubes has the same characteristic factor as the average along arithmetic progressions of length $d-1$, which is a $d-1$-step pro-nilsystem. The main result of the paper is to give the topological counterpart of this fact, that is, to show that pro-nilfactors are the topological characteristic factors along cubes of minimal systems.

\subsection{Topological characteristic factors along arithmetic progressions}\
\medskip

The counterpart of characteristic factors in topological dynamics was first studied by
Glasner in \cite{G94}. In \cite{G94} Glasner studied the characteristic
factors for the transformation $T\times T^2\times \ldots \times T^d$.

\begin{de}
Let $(X,T)$ be a topological system and let $\pi : (X,T)\rightarrow (Y,T)$ be a
factor map. A subset $L$ of $X$ is called {\em $\pi$-saturated} if
$\{x\in L: \pi^{-1}(\pi(x))\subseteq L\}=L$, i.e. $L=\pi^{-1}(\pi(L))$.
\end{de}

Here is the definition of topological characteristic factors along arithmetic progressions:

\begin{de}\cite{G94}
Let $(X,T)$ be a system and $d\in \N$. Let $\pi: (X,T)\rightarrow (Y,T)$ be a factor map and $\sigma_d=T\times T^2\times
\ldots\times T^d$. $(Y,T)$ is said to be a {\em topological
characteristic factor (along arithmetic progressions) of order $d$ } if there exists a dense $G_\d$
set $X_0$ of $X$ such that for each $x\in X_0$ the orbit
closure $L_x=\overline{\O}(\underbrace{(x, \ldots,x)}_{2^d \
\text{times}}, \sigma_d)$ is $\underbrace{\pi\times \ldots \times
\pi}_{d \ \text{times}}$ saturated. That is, $(x_1,x_2,\ldots, x_d)\in L_x$
iff $(x_1',x_2',\ldots, x_d')\in L_x$ whenever for all $i$,
$\pi(x_i)=\pi(x_i')$.
\end{de}

In \cite{G94} it is said that this notation was suggested by Furstenberg and it is systematically studied.
In \cite{G94}, it is shown that up to a canonically defined proximal extension, a characteristic family for $T\times T^2\times \ldots \times T^d$ is the family of canonical PI flows of class $d-1$. In particular,
if $(X,T)$ is a distal minimal system, then its largest class $d-1$ distal factor is its topological characteristic factor of order $d$; if $(X,T)$ is a weakly mixing system $(X,T)$, then the trivial system is its topological characteristic factor. For more related results and details please refer to \cite{G94}.

\medskip

A unsolved problem is:

\begin{conj}\label{character}
If $(X,T)$ is a distal minimal system, then its maximal $(d-1)$-step pro-nilfactor
is its topological characteristic factor along arithmetic progressions of order $d$.
\end{conj}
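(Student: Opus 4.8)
We conclude this introduction by sketching a possible line of attack on Conjecture~\ref{character}, adapting Host and Kra's ergodic-theoretic reduction to the topological category and using the cubic results of this paper as the main input. In \cite{HK05} both the $d$-dimensional cube average and the length-$d$ progression average $\frac1N\sum_{n}f_1(T^nx)\cdots f_d(T^{dn}x)$ are controlled by the same Host--Kra seminorm, hence by the same factor $Z_{d-1}$; in the topological picture the counterpart of $Z_{d-1}$ is the maximal $(d-1)$-step pro-nilfactor $X_{d-1}$, which for a minimal system coincides with $X/\RP^{[d-1]}$, and the cubic theorem of this paper already identifies $X_{d-1}$ as the topological cubic characteristic factor of order $d$ in the distal case. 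Writing $\pi: X\to X_{d-1}$ for the factor map, the task is therefore to show that the $\sigma_d$-orbit closure $L_x$ of the diagonal in $X^d$ --- which is intimately tied to the order-$d$ cube orbit closure, being obtained from it by specializing the cube parameter to the ``diagonal'' $\mathbf{n}=(n,\ldots,n)$ --- inherits $\pi\times\cdots\times\pi$-saturation from the saturation of the cube orbit closure, for $x$ in a dense $G_\d$ set.

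The first step would be to reduce to a finite distal tower. By Glasner's theorem \cite{G94} the largest distal factor $Y$ of class $d-1$ is already a topological characteristic factor along arithmetic progressions of order $d$ for $(X,T)$; since every $(d-1)$-step pro-nilsystem is distal of class at most $d-1$, the factor $X_{d-1}$ is a factor of $Y$, and in fact the maximal $(d-1)$-step pro-nilfactor of $Y$ is again $X_{d-1}$. Because $\pi$-saturation is \emph{stronger} than saturation with respect to the factor map onto $Y$, Glasner's theorem does not by itself yield the conjecture, but it does reduce it to the case $X=Y$, that is, to a distal minimal system presented as a finite tower $\{\ast\}=W_0\leftarrow W_1\leftarrow\cdots\leftarrow W_{d-1}=X$ of isometric extensions. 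One then has to prove that, for $x$ in a dense $G_\d$ set, $L_x$ is $\pi\times\cdots\times\pi$-saturated.

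The core of the argument would be an induction along this tower. At a step $W_k\to W_{k-1}$, modelled locally by a skew product with compact structure group $K$ and cocycle $\rho$, one would assume generic $\RP^{[d-1]}$-saturation of $\sigma_d$-orbit closures over $W_{k-1}$ and analyse the $\sigma_d$-action on the $K^d$-fibres over $W_{k-1}^d$; this is a higher-order equidistribution computation in the spirit of Furstenberg's and Glasner's structure arguments. One must show that the sub-fibre actually occurring in $L_x$ is cut out by a ``type $(d-1)$'' condition on the cocycle $\rho$, and that this condition is equivalent to membership in $\RP^{[d-1]}(W_k)$; at this point one would invoke the cubic theorem of the paper to read off $\RP^{[d-1]}(W_k)$ from the order-$(d-1)$ face structure on $W_k$ and to determine which fibre directions survive in $X_{d-1}$.

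The main obstacle --- and the reason the statement remains only a conjecture --- is precisely this last identification. An isometric extension of a $(d-1)$-step pro-nilsystem need not be pro-nil, so the tower induction does not remain inside the class of systems for which the target factor is transparent, and the gap between ``distal of class $d-1$'' and ``$(d-1)$-step pro-nil'' can be arbitrarily large. Closing this gap calls for a topological analogue of the fact, established in \cite{HK05} through the Host--Kra seminorms together with PET and van der Corput induction, that the length-$d$ progression average detects nothing beyond $Z_{d-1}$. The cubic case of this paper succeeds because the order-$(d-1)$ face orbit closure literally computes $\RP^{[d-1]}$, whereas the $\sigma_d$-orbit closure of the diagonal is a much thinner subset of the cube, and no reduction from progressions to cubes is presently available in the purely topological setting. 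A proof of the conjecture would therefore require either supplying such a reduction, or carrying out the skew-product equidistribution analysis above while simultaneously tracking the pro-nil filtration at every stage of the tower.
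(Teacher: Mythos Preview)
The statement you are addressing is Conjecture~\ref{character}, which the paper explicitly introduces as ``an unsolved problem'' and does \emph{not} prove. There is therefore no proof in the paper to compare your proposal against.

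Your submission is, appropriately, not a proof either: you present it as ``a possible line of attack'' and you yourself isolate the obstruction, namely that an isometric extension of a $(d-1)$-step pro-nilsystem need not be pro-nil, so the gap between Glasner's class-$(d-1)$ distal factor and the maximal $(d-1)$-step pro-nilfactor cannot be closed by the tower induction you outline. This diagnosis is accurate and is precisely why the statement remains a conjecture; the paper makes no further progress on it beyond settling the \emph{cubic} analogue (Corollary~\ref{thm-distal}). Your reduction via \cite{G94} to the case where $X$ itself is distal of class $d-1$ is correct, and your identification of the missing ingredient---a topological substitute for the van der Corput/PET mechanism that collapses arithmetic progressions to cubes in \cite{HK05}---is exactly the point at which current techniques run out.

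In short: there is no gap to flag in the sense of a mistaken step, because you do not claim to have a proof. What you have written is a reasonable heuristic discussion of why the conjecture is plausible and where the difficulty lies, and it is consistent with the paper's own treatment of the problem as open.
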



\subsection{Topological characteristic factors along cubes and main results of the paper}\
\medskip

First we define topological characteristic factors along cubes.
The transformation group related to (\ref{C1}) is the face group $\F^{[d]}$. Please refer to next section for precise definition. Note that the group $\F^{[d]}$ acts on $X^{2^d}$ and it acts on the first coordinate as an identity map.


\begin{de}\label{de-characristic-factor}
Let $(X,T)$ be a system and $d\in \N$. Let $\pi: (X,T)\rightarrow (Y,T)$ be a factor map. The system $(Y,T)$ is said to be a {\em topological cubic
characteristic factor of order $d$ } or {\em topological characteristic factor along cubes of order $d$} if there exists a dense $G_\d$
set $X_0$ of $X$ such that for each $x\in X_0$ the set $F_x=p_* \left(\overline{\O}(\underbrace{(x, \ldots,x)}_{2^d \
\text{times}}, \F^{[d]})\right)$ is $\pi^{2^d-1}=\underbrace{\pi\times \ldots \times
\pi}_{2^d-1 \ \text{times}}$ saturated, where $p_*: X^{2^d}\rightarrow X^{2^d-1}$ is the projection on the later $2^d-1$ coordinates. That is, for each $x\in X_0$,
$$\overline{\O}(\underbrace{(x, \ldots,x)}_{2^d \
\text{times}}, \F^{[d]})=\{x\}\times (\pi^{2^d-1})^{-1}(\pi^{2^d-1}F_x).$$
\end{de}

One of the main results of the paper is that up to proximal extensions the maximal $(d-1)$-step pro-nilfactor is the topological cubic
characteristic factor of order $d$. To be precise, we will show the following theorem:

\begin{thm}\label{thm-main}
Let $(X,T)$ be a minimal system and $d\in \N$. Let $\pi:(X,T)\rightarrow (Z_{d-1},T)$ be the factor map to the maximal $(d-1)$-step pro-nilfactor. Then there is a commutative diagram of homomorphisms of minimal flows
\begin{equation*}
  \xymatrix{
    X\ar[d]_{\pi}& X^\prime\ar[d]^{\pi^\prime}\ar[l]_{\theta^\prime} \\
    Z_{d-1}  &Y^\prime\ar[l]_{\theta}
    }
\end{equation*}
such that $(Y',T)$ is the topological cubic
characteristic factor of order $d$ of $(X',T)$,
where $\theta,\theta^\prime$ are proximal extensions.
\end{thm}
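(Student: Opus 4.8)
The plan is to follow the strategy of Glasner's structure theory for topological characteristic factors, adapted to the cube setting via the machinery of dynamical parallelepipeds from \cite{HK05}. The starting point is the observation that for a minimal system $(X,T)$, the orbit closure $\overline{\O}((x,\ldots,x),\F^{[d]})$ is, for $x$ in a dense $G_\delta$ set, independent (up to the natural identifications) of $x$; in fact one expects it to coincide with the fiber over $x$ of the space of dynamical parallelepipeds $\mathbf{Q}^{[d]}(X)$, or at least to be closely controlled by it. The key structural input is that the maximal $(d-1)$-step pro-nilfactor $Z_{d-1}$ is precisely the factor obtained by collapsing the ``regionally proximal relation of order $d-1$'' $\mathbf{RP}^{[d-1]}$, and that two points $x,x'$ with $\pi(x)=\pi(x')$ are characterized by the property that $(x,x',x',\ldots,x')$ (or an appropriate arrangement) lies in $\mathbf{Q}^{[d]}(X)$. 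This is what links the cube structure to the pro-nilfactor and is the reason the answer comes out as $Z_{d-1}$ rather than $Z_d$.

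The proof proceeds in the following steps. First, I would pass to a suitable extension to gain regularity: using the standard technique of taking an almost one-to-one (hence proximal) extension followed by the relevant orbit-closure constructions, replace $(X,T)$ by an extension $(X',T)$ on which the enveloping-semigroup/Ellis-group calculations are rigid enough that the set $F_x$ is genuinely constant in $x$ over a dense $G_\delta$ and behaves well under the factor map. Here the Ellis semigroup and the idempotent-selection arguments (à la Glasner \cite{G94}) do the work: one selects minimal idempotents to produce proximal extensions $\theta,\theta'$ fitting into the commutative square, with $Y' = Z_{d-1}(X')$ being the maximal $(d-1)$-step pro-nilfactor of $X'$ and $\theta$ the induced map. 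Second, on the extension $X'$ I would prove the inclusion $\overline{\O}((x,\ldots,x),\F^{[d]}) \subseteq \{x\}\times (\pi'^{2^d-1})^{-1}(\pi'^{2^d-1}F_x)$, which amounts to showing that the cube orbit closure is $\mathbf{RP}^{[d-1]}$-saturated in the last $2^d-1$ coordinates — this follows because the face group action cannot distinguish points that are regionally proximal of order $d-1$, using the ergodic-style estimates of \cite{HK05} transported to the topological setting, together with the fact that the face transformations acting on coordinates indexed by $\ep \neq \mathbf{0}$ generate, after projection, exactly the dynamics whose regionally proximal relation of order $d-1$ is trivialized on $Z_{d-1}$. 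Third, the reverse inclusion: one must show that every point of $\{x\}\times (\pi'^{2^d-1})^{-1}(\pi'^{2^d-1}F_x)$ is actually attained, i.e. that the pro-nilfactor is not ``too large'' to be characteristic — here one uses that on a $(d-1)$-step pro-nilsystem the cube structure $\mathbf{Q}^{[d]}$ is as large as possible (it surjects onto all compatible tuples), a fact provable by induction on $d$ using the tower structure of the pro-nilfactor and the lifting of parallelepipeds through each isometric extension in the tower.

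The main obstacle I anticipate is the second step combined with the constancy-in-$x$ issue: showing that the cube orbit closure is saturated with respect to $\pi'$ requires simultaneously (a) that the extension $X'$ has been chosen so that $F_x$ does not depend on $x$ on a residual set, which is a genuine Baire-category plus Ellis-semigroup argument rather than a formality, and (b) a topological analogue of the van der Corput / seminorm estimates from \cite{HK05} that forces functions (or points) measurable with respect to $\mathbf{RP}^{[d-1]}$ to control the cube averages — in the topological category this must be phrased as an orbit-closure containment and proved by a careful induction on the dimension $d$, peeling off one coordinate direction at a time and invoking the inductive hypothesis on $\F^{[d-1]}$. Getting the bookkeeping of which coordinates are collapsed by which face transformation to mesh correctly with the tower $Z_0 \leftarrow Z_1 \leftarrow \cdots \leftarrow Z_{d-1}$ is where the real technical weight lies; everything else is, in principle, an assembly of known proximal-extension and pro-nilfactor techniques. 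The distal case stated in the abstract then follows because for a distal minimal system proximal extensions are isomorphisms, so $\theta,\theta'$ can be taken to be the identity and $Y' = Z_{d-1}$ is literally the topological cubic characteristic factor of $X$.
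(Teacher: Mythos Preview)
Your high-level architecture is broadly right---pass to proximal extensions, prove saturation of the cube orbit closure with respect to the fibers of $\pi'$, and deduce the distal case---but the mechanism you propose for the central saturation step is not the one the paper uses, and as stated it is a gap.

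Concretely, the paper does not invoke any ``topological analogue of the van der Corput / seminorm estimates''. Instead the engine is the \emph{RIC diagram} (Theorem~\ref{RIC}): one replaces $\pi$ by a RIC extension $\pi':X'\to Y'$ through proximal extensions $\theta,\theta'$. The RIC property is used twice, and both uses are essential: (i) minimal points are dense in $R_{\pi'}^{2^d}$, and (ii) $\pi'^{[d]}$ is open, so $(\pi'^{[d]})^{-1}$ is continuous on closed sets. The saturation $\Q^{[d]}(X')=(\pi'^{[d]})^{-1}\Q^{[d]}(Y')$ is then obtained via a direct combinatorial lemma (Lemma~\ref{lem1}): if ${\bf x}\in R_{\pi'}^{2^d}$ and the projection $p_1:\overline{\O}({\bf x},T^{[d]})\to X'$ is semi-open (automatic for $T^{[d]}$-minimal points), then every rearrangement of the coordinates of ${\bf x}$ lies in $\Q^{[d]}(X')$. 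The proof of that lemma is an elementary open-set chase using only the characterization $(x,y)\in\RP^{[d-1]}\Leftrightarrow (x,y,\ldots,y)\in\Q^{[d]}$ and Euclidean permutations of the cube; no induction on $d$ peeling off faces, no seminorm-style control. Your proposal never isolates the RIC property, and an arbitrary proximal or almost one-to-one extension will not supply the density of minimal points in $R_{\pi'}^{2^d}$ that the argument needs.

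Second, the passage from $\Q^{[d]}$ to the face-orbit closure $\overline{\F^{[d]}}(x^{[d]})$ is handled not by an Ellis-semigroup computation but by a separate Baire-category lemma (Lemma~\ref{lem-AG}, after Akin--Glasner): for $x$ in a residual set one has $\Q^{[d]}[x]=\overline{\F^{[d]}}(x^{[d]})$. This is the step that produces the dense $G_\delta$ in the definition of topological characteristic factor, and it is logically independent of the saturation argument. Your proposal conflates the two issues under ``constancy-in-$x$'' and ``idempotent selection'', which obscures the fact that they are proved by entirely different techniques. Finally, there is no separate ``reverse inclusion'' step of the kind you describe: once $\Q^{[d]}(X')=(\pi'^{[d]})^{-1}\Q^{[d]}(Y')$ is established, both inclusions for the face-orbit closure follow at once from Lemma~\ref{lem-AG}.
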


When $X$ is distal or weakly mixing, the proximal extensions $\theta, \theta'$ in theorem are trivial (i.e. isomorphisms). That is:

\begin{cor}\

\medskip

\begin{enumerate}
  \item Let $(X,T)$ be a minimal distal system and $d\in \N$. Then the maximal $(d-1)$-step pro-nilfactor is the topological cubic characteristic factor of order $d$.
  \item Let $(X,T)$ be a minimal weakly mixing system and $d\in \N$. Then the trivial system is the  topological cubic characteristic factor of order $d$.
\end{enumerate}
\end{cor}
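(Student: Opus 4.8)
\emph{Proof proposal.} Both statements are deduced from Theorem~\ref{thm-main}. Fix $d\in\N$ and take the commutative diagram of that theorem, with proximal extensions $\theta'\colon X'\to X$ and $\theta\colon Y'\to Z_{d-1}$ and with $(Y',T)$ the topological cubic characteristic factor of order $d$ of $(X',T)$. By the observation recorded just before the Corollary, when $X$ is distal or weakly mixing $\theta$ and $\theta'$ are isomorphisms; hence $X'=X$ and $Y'=Z_{d-1}$, and Theorem~\ref{thm-main} says exactly that $Z_{d-1}$ is the topological cubic characteristic factor of order $d$ of $X$. This already gives part~(1). For part~(2) it then remains only to see that the maximal $(d-1)$-step pro-nilfactor of a minimal weakly mixing system is trivial: pro-nilfactors are distal, and if $\phi\colon X\to Y$ is a factor with $X$ minimal weakly mixing and $Y$ distal, then $Y\times Y$ is a transitive factor of the transitive system $X\times X$, so $Y$ is weakly mixing; but $Y\times Y$ for a distal minimal $Y$ is a disjoint union of minimal sets, so transitivity forces $Y\times Y$ to be minimal, whence $\Delta_Y=Y\times Y$ and $Y$ is a single point. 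Thus $Z_{d-1}$ is trivial, and part~(2) follows.

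Since the force of the Corollary is carried by Theorem~\ref{thm-main} and by the claim that $\theta,\theta'$ are isomorphisms in these two cases, let me indicate how I would argue that claim rather than merely quote it. In the distal case I expect the extension $X'$ in the proof of Theorem~\ref{thm-main} to be assembled from relatively-open/RIC-type corrections which are vacuous over an already distal base; then $\theta'$ is a distal extension, and being also proximal it is an isomorphism. One cannot shortcut this with ``a proximal extension of a distal minimal system is an isomorphism'', which is false in general (e.g.\ a Sturmian system over a circle rotation), so the specific structure of $X'$ must be used. Once $X'=X$, the factor $Y'$ of the distal system $X$ is distal, $Z_{d-1}$ is distal as a pro-nilsystem, and a proximal extension between distal minimal systems is an isomorphism, so $\theta$ is one too. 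In the weakly mixing case the computation above already shows $Z_{d-1}$ is trivial; since $\theta\colon Y'\to Z_{d-1}$ is proximal, $Y'$ is minimal and proximal, hence trivial because $\Z$ is amenable, while $X'$ is itself weakly mixing since a nontrivial equicontinuous factor $E$ of $X'$ would descend to one of $X$ (consider the image of $X'$ in $X\times E$; its projection to $X$ is injective because the relation defining $\theta'$ is proximal and $E$ is distal). Finally, to pass the characteristic-factor property from $X'$ back to $X$ when $X'\neq X$, one transports it along $\theta'$: applying $(\theta')^{2^d}$ to $\overline{\O}((x',\dots,x'),\F^{[d]})=\{x'\}\times(X')^{2^d-1}$ gives $\overline{\O}((\theta'x',\dots,\theta'x'),\F^{[d]})=\{\theta'x'\}\times X^{2^d-1}$, which holds on the dense set $\theta'(X_0')$; and the set of $x\in X$ with $\overline{\O}((x,\dots,x),\F^{[d]})=\{x\}\times X^{2^d-1}$ is $G_\delta$, being the intersection over a countable base $\{W\}$ of $X^{2^d}$ of the open sets $\{x:\exists g\in\F^{[d]},\ g(x,\dots,x)\in W\}$, hence a dense $G_\delta$.

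The main obstacle is the first half of the previous paragraph: recognizing, from inside the proof of Theorem~\ref{thm-main}, that its auxiliary proximal extension is redundant when $X$ is distal. Everything else — triviality of $Z_{d-1}$ in the weakly mixing case, triviality of minimal proximal $\Z$-flows, the descent of equicontinuous factors along proximal extensions, and the $G_\delta$/transfer argument — is routine.
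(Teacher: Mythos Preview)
Your overall strategy---apply Theorem~\ref{thm-main} and argue that the proximal extensions $\theta,\theta'$ collapse---matches the one-line justification the paper gives just before the Corollary. The paper's substantive arguments in Section~3, however, are more direct, and your write-up has a fillable gap in part~(1) and an unnecessary detour in part~(2).

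For part~(1), the missing step is cleaner than you fear: since $X$ is distal, so is its factor $Z_{d-1}$, hence $\pi\colon X\to Z_{d-1}$ is a distal extension and therefore already RIC (stated just before Theorem~\ref{RIC}). The EGS diagram is thus trivial ($X'=X$, $Y'=Z_{d-1}$), and Proposition~\ref{pro2} applies directly. You were right that one cannot argue abstractly that a proximal extension over a distal base is an isomorphism; the concrete reason the diagram collapses is simply ``distal $\Rightarrow$ RIC''. The paper in fact does more: via Lemma~\ref{th4} it proves Corollary~\ref{thm-distal}, obtaining the saturation \eqref{s1} for \emph{every} $x\in X$, not merely on a dense $G_\delta$---a strengthening your route through Theorem~\ref{thm-main} cannot reach.

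For part~(2), your argument that $Z_{d-1}$ is trivial is correct, and once $Y'$ is trivial the saturation condition forces $\overline{\F^{[d]}}(x'^{[d]})=\{x'\}\times (X')^{[d]}_*$ on a dense $G_\delta$ in $X'$; your transfer along $\theta'$ and the $G_\delta$ check then work. But the clause asserting that $X'$ is weakly mixing is both unneeded and not established by what you wrote: showing that equicontinuous factors of $X'$ descend to $X$ only gives that $X'$ has no nontrivial equicontinuous factor, which is strictly weaker than weak mixing. Simply delete that clause. The paper itself bypasses all of this by quoting \cite{SY}, which proves directly that $\overline{\F^{[d]}}(x^{[d]})=\{x\}\times X^{[d]}_*$ for \emph{all} $x$ in a minimal weakly mixing system.
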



We do not know whether one may remove proximal extensions in Theorem \ref{thm-main}, that is, we have the following question:

\begin{ques}\label{character}
Let $(X,T)$ be a minimal system and $d\in \N$. Is the maximal $(d-1)$-step pro-nilfactor the topological cubic characteristic factor of order $d$?
\end{ques}



\medskip
\noindent{\bf Acknowledgments:} We would like to thank Professor Wen Huang and Professor Xiangdong Ye for very useful suggestions.

\section{Preliminaries}

In the article, integers, nonnegative integers and natural numbers
are denoted by $\Z$, $\Z_+$ and $\N$ respectively. In the
following subsections we give the basic background in topological
dynamics necessary for the article.

\subsection{Topological dynamical systems}

By a {\it topological dynamical system} (TDS for short) we mean a
pair $(X,T)$ where $X$ is a compact metric space (with metric $d$)
and $T:X \to X$ is a homeomorphism.  For $n \geq 2$ one
writes $(X^n,T^{(n)})$ for the $n$-fold product system $(X\times
\cdots \times X,T\times \cdots \times T)$. The diagonal of $X^n$
is
$$\Delta_n(X)=\{(x,\ldots,x)\in X^n: x\in X\}.$$
When $n=2$ one writes
$\Delta_2(X)=\Delta(X)$.
The {\it orbit of $x \in X$}
is given by $\O(x,T)=\{T^nx: n\in \Z\}$.
For convenience, sometimes one denotes the orbit closure of $x\in X$
under $T$ by $\overline{\O}(x,T)$ or $\overline{\O}(x)$, instead of
$\overline{\O(x, T)}$.

\medskip

A TDS $(X,T)$ is {\it transitive} if for any two nonempty open
sets $U$ and $V$ there is $n\in \Z$ such that $U\cap T^{-n}V\neq
\emptyset$. It is {\it point transitive} if there exists $x\in X$
such that $\overline{\O(x,T)}=X$; such $x$ is called a {\it
transitive point}. One says $(X,T)$ is {\it weakly mixing} if the product system
$(X^2,T^{(2)})$ is transitive. A TDS $(X,T)$ is {\it minimal} if
$\overline{\O(x,T)}=X$ for every $x\in X$. A point $x \in X $ is
{\it minimal} or {\it almost periodic} if the subsystem
$(\overline{\O(x,T)},T)$ is minimal.

\medskip

A {\em factor map} $\pi: X\rightarrow Y$ between the TDS $(X,T)$
and $(Y,S)$ is a continuous onto map which intertwines the
actions; one says that $(Y,S)$ is a {\it factor} of $(X,T)$ and
that $(X,T)$ is an {\it extension} of $(Y,S)$.

\medskip

Generally, a {\em topological dynamical systems} is a triple
$\X=(X, G, \Pi)$, where $X$ is a compact $T_2$ space, $G$ is a
$T_2$ topological group and $\Pi: G\times X\rightarrow X$ is a
continuous map such that $\Pi(e,x)=x$ and
$\Pi(s,\Pi(t,x))=\Pi(st,x)$. We should fix $G$ and suppress the
action symbol. In lots of literatures, $\X$ is also called a {\em
topological transformation group} or a {\em flow}. An
analogous definition can be given if  $G$ is a semigroup. Also, the notions of
transitivity, minimality and weak mixing are naturally generalized to
group actions.

\subsection{Cubes and faces}\
\medskip

Cube groups and face groups are introduced by Host and Kra in dynamical systems. Please refer to \cite{HK05, HK18} for more details.

\medskip

Let $X$ be a set, let $d\ge 1$ be an integer, and write
$[d] = \{1, 2,\ldots , d\}$. We view element in $\{0, 1\}^d$ as a sequence $\ep=\ep_1\ldots \ep_d$ of $0'$s and
$1'$s.

Let $V_d=\{0,1\}^d$ and $V_d^*=V_d\setminus \{{\bf
0}\}$.
If ${\bf n} = (n_1,\ldots, n_d)\in \Z^d$ and $\ep\in \{0,1\}^d$, we
define
$${\bf n}\cdot \ep = \sum_{i=1}^d n_i\ep_i .$$

We denote $X^{2^d}$ by $X^{[d]}$. A point ${\bf x}\in X^{[d]}$ can
be written as
$${\bf x} = (x_\ep :\ep\in \{0,1\}^d ). $$
Hence $x_{\bf 0}$ is the first coordinate of ${\bf x}$.
As examples, points in $X^{[2]}$ are like
$$(x_{00},x_{10},x_{01},x_{11}).$$

For $x \in X$, we write $x^{[d]} = (x, x,\ldots , x)\in  X^{[d]}$.
The diagonal of $X^{[d]}$ is $\D^{[d]} = \{x^{[d]}: x\in X\}$.
Usually, when $d=1$, denote diagonal by $\D_X$ or $\D$ instead of
$\D^{[1]}$.

A point ${\bf x} \in X^{[d]}$ can be decomposed as ${\bf x} = ({\bf
x'},{\bf  x''})$ with ${\bf x}', {\bf x}''\in X^{[d-1]}$, where
${\bf x}' = (x_{\ep0} : \ep\in \{0,1\}^{d-1})$ and ${\bf x}''=
(x_{\ep1} : \ep\in \{0,1\}^{d-1})$. We can also isolate the first
coordinate, writing $X^{[d]}_* = X^{2^d-1}$ and then writing a point
${\bf x}\in X^{[d]}$ as ${\bf x} = (x_{\bf 0}, {\bf x}_*)$, where
${\bf x}_*= (x_\ep : \ep\neq {\bf 0}) \in X^{[d]}_*$.



\subsection{Dynamical parallelepipeds}

\begin{de}\label{de-Q}
Let $(X, T)$ be a topological dynamical system and let $d\ge 1$ be
an integer. We define $\Q^{[d]}(X)$ to be the closure in $X^{[d]}$
of elements of the form $$(T^{{\bf n}\cdot \ep}x=T^{n_1\ep_1+\ldots
+ n_d\ep_d}x: \ep= (\ep_1,\ldots,\ep_d)\in\{0,1\}^d) ,$$ where ${\bf
n} = (n_1,\ldots , n_d)\in \Z^d$ and $ x\in X$. When there is no
ambiguity, we write $\Q^{[d]}$ instead of $\Q^{[d]}(X)$. An element
of $\Q^{[d]}(X)$ is called a (dynamical) {\em parallelepiped of
dimension $d$}.
\end{de}

As examples, $\Q^{[2]}$ is the closure in $X^{[2]}=X^4$ of the set
$$\{(x, T^mx, T^nx, T^{n+m}x) : x \in X, m, n \in \Z\}$$ and $\Q^{[3]}$
is the closure in $X^{[3]}=X^8$ of the set $$\{(x, T^mx, T^nx,
T^{m+n}x, T^px, T^{m+p}x, T^{n+p}x, T^{m+n+p}x) : x\in X, m, n, p\in
\Z\}.$$

\begin{de}
Let $\phi: X\rightarrow Y$ and $d\in \N$. Define $\phi^{[d]}:
X^{[d]}\rightarrow Y^{[d]}$ by $(\phi^{[d]}{\bf x})_\ep=\phi x_\ep$
for every ${\bf x}\in X^{[d]}$ and every $\ep\subset [d]$.
Let $(X, T)$ be a system and $d\ge 1$ be an integer. The {\em
diagonal transformation} of $X^{[d]}$ is the map $T^{[d]}$.
\end{de}

\begin{de}
{\em Face transformations} are defined inductively as follows: Let
$T^{[0]}=T$, $T^{[1]}_1=\id \times T$. If
$\{T^{[d-1]}_j\}_{j=1}^{d-1}$ is defined already, then set
\begin{equation}\label{def-T[d]}
\begin{split}
T^{[d]}_j&=T^{[d-1]}_j\times T^{[d-1]}_j, \ j\in \{1,2,\ldots, d-1\},\\
T^{[d]}_d&=\id ^{[d-1]}\times T^{[d-1]}.
\end{split}
\end{equation}
\end{de}


The {\em face group} of dimension $d$ is the group $\F^{[d]}(X)$ of
transformations of $X^{[d]}$ spanned by the face transformations.
The {\em cube group} or {\em parallelepiped group} of dimension $d$ is the group
$\G^{[d]}(X)$ spanned by the diagonal transformation and the face
transformations. We often write $\F^{[d]}$ and $\G^{[d]}$ instead of
$\F^{[d]}(X)$ and $\G^{[d]}(X)$, respectively. For $\G^{[d]}$ and
$\F^{[d]}$, we use similar notations to that used for $X^{[d]}$:
namely, an element of either of these groups is written as $S =
(S_\ep : \ep\in\{0,1\}^d)$. In particular, $\F^{[d]} =\{S\in
\G^{[d]}: S_{\bf 0} ={\rm id}\}$.
Let $p_*: X^{[d]}\rightarrow X^{[d]}_*$ be the projection. Then all transformations of $\G^{[d]}$ and $\F^{[d]}$ factor through the projection $p_*$ and induce transformations of $X^{[d]}_*$. We denote the corresponding groups by $\G^{[d]}_*$ and $\F^{[d]}_*$ respectively.

\medskip

For convenience, we denote the orbit closure of ${\bf x}\in X^{[d]}$
under $\F^{[d]}$ by $\overline{\F^{[d]}}({\bf x})$, instead of
$\overline{\O({\bf x}, \F^{[d]})}$.
It is easy to verify that $\Q^{[d]}$ is the closure in $X^{[d]}$ of
$$\{Sx^{[d]} : S\in \F^{[d]}, x\in X\}.$$
If $x$ is a transitive point of $X$, then $\Q^{[d]}$ is the closed
orbit of $x^{[d]}$ under the group $\G^{[d]}$.

If $(X,T)$ is minimal, then for all $x\in X$, $(\overline{\F^{[d]}}(x^{[d]}), \F^{[d]})$ is minimal, and $(\Q^{[d]},\G^{[d]})$ is minimal \cite{SY}.

\subsection{Nilmanifolds and nilsystems}\
\medskip

Let $G$ be a group. For $g, h\in G$ and $A,B \subset G$, we write $[g, h] =
ghg^{-1}h^{-1}$ for the commutator of $g$ and $h$ and
$[A,B]$ for the subgroup spanned by $\{[a, b] : a \in A, b\in B\}$.
The commutator subgroups $G_j$, $j\ge 1$, are defined inductively by
setting $G_1 = G$ and $G_{j+1} = [G_j ,G]$. Let $d \ge 1$ be an
integer. We say that $G$ is {\em $d$-step nilpotent} if $G_{d+1}$ is
the trivial subgroup.

\medskip

Let $G$ be a $d$-step nilpotent Lie group and $\Gamma$ be a discrete
cocompact subgroup of $G$. The compact manifold $X = G/\Gamma$ is
called a {\em $d$-step nilmanifold}. The group $G$ acts on $X$ by
left translations and we write this action as $(g, x)\mapsto gx$.
The Haar measure $\mu$ of $X$ is the unique probability measure on
$X$ invariant under this action. Let $\tau\in G$ and $T$ be the
transformation $x\mapsto \tau x$ of $X$. Then $(X, \mu, T)$ is
called a {\em $d$-step nilsystem}. In the topological setting we omit the measure
and just say that $(X,T)$ is a $d$-step nilsystem.

\medskip

We will need to use inverse limits of nilsystems, so we recall the
definition of a sequential inverse limit of systems. If
$(X_i,T_i)_{i\in \N}$ are systems with $diam(X_i)\le 1$ and
$\pi_i: X_{i+1}\rightarrow X_i$ are factor maps, the {\em inverse
limit} of the systems is defined to be the compact subset of
$\prod_{i\in \N}X_i$ given by $\{ (x_i)_{i\in \N }: \pi_i(x_{i+1}) =
x_i\}$, and we denote it by
$\lim\limits_{\longleftarrow}(X_i,T_i)_{i\in\N}$. It is a compact
metric space endowed with the distance $\rho((x_{i})_{i\in\N}, (y_{i})_{i\in
\N}) = \sum_{i\in \N} 1/2^i \rho_i(x_i, y_i )$, where $\rho_{i}$ is the metric in
$X_{i}$. We note that the
maps $T_i$ induce naturally a transformation $T$ on the inverse
limit.

\medskip

The following structure theorem characterizes inverse limits of
nilsystems using dynamical parallelepipeds.

\begin{thm}[Host-Kra-Maass]\cite[Theorem 1.2.]{HKM}\label{HKM}
Assume that $(X, T)$ is a transitive topological dynamical system
and let $d \ge 2$ be an integer. The following properties are
equivalent:
\begin{enumerate}
  \item If ${\bf x}, {\bf y} \in \Q^{[d]}$ have $2^d-1$ coordinates in common, then ${\bf x} = {\bf y}$.
  \item If $x, y \in X$ are such that $(x, y,\ldots , y) \in  \Q^{[d]}$,
then $x = y$.
  \item $X$ is an inverse limit of $(d-1)$-step minimal
nilsystems.
\end{enumerate}
\end{thm}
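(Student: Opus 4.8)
The plan is to prove the cycle of implications $(3)\Rightarrow(1)\Rightarrow(2)\Rightarrow(3)$. The implication $(1)\Rightarrow(2)$ is immediate: given $x,y\in X$ with $(x,y,\ldots,y)\in\Q^{[d]}$, I would compare this point with the diagonal point $y^{[d]}=(y,y,\ldots,y)$, which lies in $\Q^{[d]}$ (take ${\bf n}={\bf 0}$). These two parallelepipeds agree in all $2^d-1$ coordinates indexed by $\ep\neq{\bf 0}$, so hypothesis (1) forces them to coincide, whence $x=y$.

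For $(3)\Rightarrow(1)$ I would first treat a single $(d-1)$-step nilsystem $X=G/\Gamma$. Using the Host--Kra description of $\Q^{[d]}(X)$ as the orbit closure of the diagonal under the cube group (equivalently, under the Host--Kra cube group sitting inside $G^{[d]}$), the vertex of a parallelepiped at the corner $\ep={\bf 1}=1\ldots1$ is expressed through the remaining vertices by a product identity whose correction terms are iterated commutators of weight $d$; since $G$ is $(d-1)$-step nilpotent these terms vanish, so $x_{\bf 1}$ is determined by the other coordinates. Because $\Q^{[d]}$ is invariant under the natural action of the symmetry group of the cube $\{0,1\}^d$ (coordinate permutations and reflections), the same conclusion holds with any single coordinate deleted, giving (1) for $X$. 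Finally (1) passes to inverse limits coordinatewise: if two elements of $\Q^{[d]}$ of an inverse limit agree in $2^d-1$ coordinates, then so do their projections to each factor $X_i$, hence those projections agree, and therefore the two elements are equal.

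The heart of the matter is $(2)\Rightarrow(3)$. The first step is to reinterpret (2) via the regionally proximal relation of order $d-1$: using the characterization that, for a minimal system, $(x,y)\in\RP^{[d-1]}$ if and only if $(x,y,\ldots,y)\in\Q^{[d]}$, condition (2) says exactly that $\RP^{[d-1]}=\D_X$. Since the proximal relation is contained in $\RP^{[d-1]}$, its triviality forces $X$ to be distal, and together with transitivity this makes $X$ minimal and distal. For such systems $\RP^{[d-1]}$ is a closed invariant equivalence relation, and the quotient $X/\RP^{[d-1]}$ is the maximal factor on which the order-$(d-1)$ regionally proximal relation collapses. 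It then remains to establish the genuinely structural statement that a minimal distal system with $\RP^{[d-1]}=\D_X$ is an inverse limit of $(d-1)$-step nilsystems. Here I would invoke the Furstenberg structure theorem, presenting $X$ as a tower of isometric extensions, and argue by induction on the tower (and on the nilpotency step) that the vanishing of $\RP^{[d-1]}$ constrains each extension so that the tower assembles into a pro-nilsystem of step at most $d-1$; the requisite nilpotent group is built from the action of the cube groups $\G^{[k]}$ on $X$ through the Ellis enveloping group.

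The main obstacle is precisely this last structural implication: reconstructing a $(d-1)$-step nilpotent Lie group structure from the purely combinatorial hypothesis that order-$d$ parallelepipeds close uniquely from $2^d-1$ of their vertices. Verifying that $\RP^{[d-1]}$ is an equivalence relation, identifying its quotient with the maximal pro-nilfactor, and proving that triviality of $\RP^{[d-1]}$ is equivalent to the pro-nilsystem structure, all rest on the enveloping semigroup machinery for distal minimal systems and a delicate induction on the step, and essentially all of the work of the theorem is concentrated there.
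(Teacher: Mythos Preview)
The paper does not prove this theorem. Theorem~\ref{HKM} is quoted verbatim from \cite[Theorem~1.2]{HKM} as background, with no accompanying argument in the present paper; it is used only as a black box (to justify the definition of a system of order $d-1$ and, via Theorems~\ref{thm-1} and~\ref{th3}, to identify the maximal pro-nilfactor). So there is no ``paper's own proof'' to compare your proposal against.

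On the proposal itself: your cycle $(3)\Rightarrow(1)\Rightarrow(2)\Rightarrow(3)$ and the treatment of the two easy implications are standard and correct in outline. There is, however, a small circularity in your handling of $(2)\Rightarrow(3)$. You invoke the equivalence ``$(x,y)\in\RP^{[d-1]}\Leftrightarrow(x,y,\ldots,y)\in\Q^{[d]}$'' to read condition~(2) as $\RP^{[d-1]}=\Delta_X$, and only \emph{afterwards} deduce distality and hence minimality. But the equivalence you cite (Theorem~\ref{thm-1} here, from \cite{SY}) is stated for minimal systems, whereas the hypothesis is only transitivity. You need an independent argument that (2) forces minimality before you are entitled to that characterization; in \cite{HKM} this is done directly from the parallelepiped structure, not through $\RP^{[d-1]}$. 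Apart from this, you correctly identify that the substantive content is the structural step from $\RP^{[d-1]}=\Delta_X$ to an inverse limit of $(d-1)$-step nilsystems, and that this requires the full machinery of \cite{HKM}.
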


A transitive system satisfying one of the equivalent properties
above is called  a {\em system of order $(d-1)$} or a {\em $(d-1)$-step pro-nilsystem}.

\subsection{Proximal, distal and regionally proximal relations}\
\medskip

Let $(X,T)$ be a TDS. Fix $(x,y)\in X^2$. It is a {\it proximal}
pair if $\inf\limits_{n\in \Z} d(T^nx, T^ny)=0$; it is a {\it
distal} pair if it is not proximal. Denote by ${\bf P}(X,T)$ the set of proximal pairs of $(X,T)$. It is also called the proximal relation. A TDS $(X,T)$ is {\it equicontinuous} if for every $\ep>0$ there
exists $\d>0$ such that $d(x,y)< \d$ implies $d(T^nx,T^ny)<\ep$
for every $n\in \Z$. It is {\it distal} if ${\bf P}(X,T)= \D(X)$. Any equicontinuous system is distal.

\medskip

Let $(X,T)$ be a minimal system. The regionally
proximal relation $\RP(X,T)$ is defined as: $(x,y)\in \RP$ if
there are sequences $x_i,y_i\in X, n_i\in \Z$ such that $x_i\to x,
y_i\to y$ and $(T\times T)^{n_i}(x_i,y_i)\to (z,z)$, $i\to \infty$,
for some $z\in X$. It is well known that $\RP(X,T)$ is
an invariant closed equivalence relation and this relation
defines the {\em maximal equicontinuous factor} $X_{eq}=X/\RP(X,T)$ of
$(X,T)$ (for example see \cite{Vr}).


\subsection{Regionally proximal relation
of order $d$}

\begin{de}
Let $(X, T)$ be a system and let $d\in \N$. The points $x, y \in X$ are
said to be {\em regionally proximal of order $d$} if for any $\d  >
0$, there exist $x', y'\in X$ and a vector ${\bf n} = (n_1,\ldots ,
n_d)\in\Z^d$ such that $\rho (x, x') < \d, \rho (y, y') <\d$, and $$
\rho (T^{{\bf n}\cdot \ep}x', T^{{\bf n}\cdot \ep}y') < \d\
\text{for any $\ep\in \{0,1\}^d\setminus \{{\bf 0}\}$}.$$ In other words, there
exists $S\in \F^{[d]}$ such that $\rho (S_\ep x', S_\ep y') <\d$ for
every $\ep\in \{0,1\}^d\setminus \{{\bf 0}\}$. The set of regionally proximal pairs of
order $d$ is denoted by $\RP^{[d]}$ (or by $\RP^{[d]}(X,T)$ in case of
ambiguity), and is called {\em the regionally proximal relation of
order $d$}.
\end{de}

It is easy to see that $\RP^{[d]}$ is a closed and invariant
relation. Observe that
\begin{equation*}
    {\bf P}(X,T)\subseteq  \ldots \subseteq \RP^{[d+1]}\subseteq
    \RP^{[d]}\subseteq \ldots \subseteq \RP^{[2]}\subseteq \RP^{[1]}=\RP(X,T).
\end{equation*}

The following theorems proved in \cite{HKM} (for minimal distal systems) and
in \cite{SY} (for general minimal systems) tell us conditions under which
$(x,y)$ belongs to $\RP^{[d]}$ and the relation between $\RP^{[d]}$ and
$d$-step pro-nilsystems.

\begin{thm}\label{thm-1}\cite{SY}
Let $(X, T)$ be a minimal system and let $d\in \N$. Then
\begin{enumerate}
\item $(x,y)\in \RP^{[d]}$ if and only if $(x,y,\ldots,y)\in \Q^{[d+1]}$
if and only if $(x,y,\ldots,y) \in
\overline{\F^{[d+1]}}(x^{[d+1]})$.

\item $\RP^{[d]}$ is an equivalence relation.

\item $(X,T)$ is a system of order $d$ if and only if $\RP^{[d]}=\Delta_X$.
\end{enumerate}
\end{thm}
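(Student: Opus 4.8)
The plan is to treat part (1) as the engine: parts (2) and (3) follow from it together with the Host--Kra--Maass theorem and a short Ellis-semigroup argument, so I would prove (1) first. I would establish the three conditions in (1) equivalent by the cycle
$$(x,y,\ldots,y)\in\overline{\F^{[d+1]}}(x^{[d+1]})\ \Longrightarrow\ (x,y,\ldots,y)\in\Q^{[d+1]}\ \Longrightarrow\ (x,y)\in\RP^{[d]}\ \Longrightarrow\ (x,y,\ldots,y)\in\overline{\F^{[d+1]}}(x^{[d+1]}).$$
The first implication is immediate because $\overline{\F^{[d+1]}}(x^{[d+1]})\subseteq\Q^{[d+1]}$ by the definition of $\Q^{[d+1]}$ (take the base point to be $x$). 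For the second, write $(x,y,\ldots,y)$ as a limit of parallelepipeds $(T^{\mathbf n^{(i)}\cdot\delta}z_i:\delta\in\{0,1\}^{d+1})$, split $\mathbf n^{(i)}=(\mathbf m^{(i)},c_i)\in\Z^d\times\Z$, and observe that the triple $x'=z_i$, $y'=T^{c_i}z_i$, together with the face element $S\in\F^{[d]}$ given by $S_\ep=T^{\mathbf m^{(i)}\cdot\ep}$, verifies the defining condition of $\RP^{[d]}$ once $i$ is large. The third implication is the substance of the theorem.

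To prove $(x,y)\in\RP^{[d]}\Rightarrow(x,y,\ldots,y)\in\overline{\F^{[d+1]}}(x^{[d+1]})$ I would combine two statements about minimal $X$. \emph{Saturation:} $\overline{\F^{[d+1]}}(x^{[d+1]})=\{\mathbf w\in\Q^{[d+1]}:w_{\mathbf 0}=x\}$; the inclusion $\subseteq$ is obvious, while $\supseteq$ is a structural fact about cube systems, to be extracted from the minimality of $(\Q^{[d+1]},\G^{[d+1]})$ and of $(\overline{\F^{[d+1]}}(x^{[d+1]}),\F^{[d+1]})$ and from the way the face group sits as the $\mathbf 0$-coordinate stabiliser inside $\G^{[d+1]}$. \emph{Lifting:} $(x,y)\in\RP^{[d]}$ already forces $(x,y,\ldots,y)\in\Q^{[d+1]}$; starting from data $x_i\to x$, $y_i\to y$, $S^{(i)}\in\F^{[d]}$ with $\rho(S^{(i)}_\ep x_i,S^{(i)}_\ep y_i)\to0$ for $\ep\ne\mathbf 0$, one uses minimality of $X$ to pick $c_i$ with $T^{c_i}x_i$ near $y_i$ (the missing $(d+1)$-st edge) and then a subnet/compactness argument, leaning again on minimality of the cube systems $(\Q^{[d]},\G^{[d]})$ and $(\overline{\F^{[d]}}(x^{[d]}),\F^{[d]})$, to promote the resulting approximate double cube to a genuine limit of $(d+1)$-dimensional parallelepipeds. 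With both in hand, $(x,y,\ldots,y)\in\Q^{[d+1]}$ has first coordinate $x$, hence lies in $\overline{\F^{[d+1]}}(x^{[d+1]})$. I would run all of (1) as an induction on $d$, with the base case $d=1$ treated directly (it recovers the classical description of the regionally proximal relation), so that the full theorem for $d-1$ — in particular transitivity of $\RP^{[d-1]}$ — is available inside the lifting step.

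Part (3) is then immediate. Apply the Host--Kra--Maass theorem (Theorem \ref{HKM}) with $d+1$ in place of $d$ (legitimate since $d\ge1$ and a minimal system is point transitive): its equivalence (2)$\Leftrightarrow$(3) says that ``$(x,y,\ldots,y)\in\Q^{[d+1]}\Rightarrow x=y$'' holds exactly when $X$ is an inverse limit of $d$-step minimal nilsystems, i.e. a system of order $d$; by part (1) this hypothesis is the same as $\RP^{[d]}\subseteq\Delta_X$, hence (as $\RP^{[d]}\supseteq\Delta_X$ always) the same as $\RP^{[d]}=\Delta_X$. For part (2), reflexivity is trivial and symmetry is built into the definition of $\RP^{[d]}$. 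Transitivity I would deduce from part (1) via the Ellis semigroup $E=E(X^{[d+1]},\F^{[d+1]})$: if $(x,y),(y,z)\in\RP^{[d]}$, then $(y,z,\ldots,z)\in\overline{\F^{[d+1]}}(y^{[d+1]})$, so there is $p\in E$ with $p\,y^{[d+1]}=(y,z,\ldots,z)$; since every element of $\F^{[d+1]}$ acts coordinatewise and is the identity on the $\mathbf 0$-coordinate, the same is true of $p$. Applying $p$ to $(x,y,\ldots,y)\in\overline{\F^{[d+1]}}(x^{[d+1]})\subseteq\Q^{[d+1]}$ keeps the first coordinate equal to $x$ and sends every other coordinate (which is $y$) to $z$, so $p\cdot(x,y,\ldots,y)=(x,z,\ldots,z)$; as $\Q^{[d+1]}$ is closed and $\F^{[d+1]}$-invariant it is $E$-invariant, so $(x,z,\ldots,z)\in\Q^{[d+1]}$, and part (1) gives $(x,z)\in\RP^{[d]}$.

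The main obstacle is the third implication of (1), and inside it the two lemmas above — above all the passage from ``two orbits that approach each other under a face transformation'' (the definition of $\RP^{[d]}$) to an honest parallelepiped of one higher dimension. The obstruction is that face transformations involve arbitrarily high powers of $T$, under which closeness of base points is not preserved, so the two cubes cannot be glued along a short edge naively; overcoming this is precisely where the minimality of the cube and face-group systems has to be used in an essential and somewhat technical way. By contrast, the Ellis-semigroup argument for transitivity and the reduction of (3) to Host--Kra--Maass are light once (1) is available.
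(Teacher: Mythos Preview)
The paper does not prove this theorem at all; it is quoted from \cite{SY} as a preliminary result, so there is no in-paper argument to compare your outline against.

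That said, your plan has one concrete gap. In the ``Saturation'' step you assert
\[
\overline{\F^{[d+1]}}(x^{[d+1]})=\{\mathbf w\in\Q^{[d+1]}:w_{\mathbf 0}=x\}=\Q^{[d+1]}[x]
\]
for every $x\in X$. This is too strong: in a general minimal system the equality holds only for $x$ in a dense $G_\delta$ subset of $X$ --- that is exactly the content of Lemma~\ref{lem-AG} in the present paper, and the reason the paper's later results are stated ``up to a residual set''. For an arbitrary $x$ one only has (Lemma~\ref{pro1}) that $\overline{\F^{[d+1]}}(x^{[d+1]})$ is the \emph{unique $\F^{[d+1]}$-minimal subset} of $\Q^{[d+1]}[x]$, which is strictly weaker. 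Since part~(1) must hold for \emph{every} pair $(x,y)\in\RP^{[d]}$, generic saturation does not suffice, and the implication ``$(x,y,\ldots,y)\in\Q^{[d+1]}\Rightarrow(x,y,\ldots,y)\in\overline{\F^{[d+1]}}(x^{[d+1]})$'' as you have written it is unjustified.

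The repair is short and uses only tools already stated in the paper. Once your ``Lifting'' step places $(x,y,\ldots,y)$ in $\Q^{[d+1]}[x]$, observe that this particular point is $\id\times T^{[d+1]}_*$-minimal: its tail $(y,\ldots,y)$ lies on the diagonal $\Delta^{[d+1]}_*$ and $(X,T)$ is minimal. By Lemma~\ref{lem-sy-face} it is therefore $\F^{[d+1]}$-minimal, and by Lemma~\ref{pro1} its $\F^{[d+1]}$-orbit closure must coincide with the unique minimal set $\overline{\F^{[d+1]}}(x^{[d+1]})$; in particular $(x,y,\ldots,y)\in\overline{\F^{[d+1]}}(x^{[d+1]})$. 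With this correction the rest of your outline --- the cycle of implications in (1), the Ellis-semigroup argument for transitivity in (2), and the reduction of (3) to Theorem~\ref{HKM} --- is sound and is essentially the route taken in \cite{SY}.
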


\begin{thm}\label{thm0}\cite{SY}\label{th3}
Let $\pi: (X,T)\rightarrow (Y,S)$ be a factor map between minimal systems
and let $d\in \N$. Then
\begin{enumerate}
  \item $\pi\times \pi (\RP^{[d]}(X,T))=\RP^{[d]}(Y,S)$.
  \item $(Y,T)$ is a system of order $d$ if and only if $\RP^{[d]}(X,T)\subset R_\pi$.
\end{enumerate}
In particular, the quotient of $(X,T)$ under $\RP^{[d]}(X,T)$ is the
maximal $d$-step pronilfactor of $X$ (i.e. the maximal factor of order
$d$).
\end{thm}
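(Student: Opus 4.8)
The plan is to treat part (1) as the heart of the matter and to deduce part (2) together with the closing ``in particular'' assertion from it and Theorem~\ref{thm-1}. I would begin with the easy inclusion $\pi\times\pi(\RP^{[d]}(X,T))\subseteq\RP^{[d]}(Y,S)$. Since $X$ is compact, $\pi$ is uniformly continuous, so given $(x,y)\in\RP^{[d]}(X)$ and $\delta>0$ I would first choose $\delta'>0$ with $d(p,q)<\delta'\Rightarrow d(\pi p,\pi q)<\delta$, then feed $\delta'$ into the definition of $\RP^{[d]}(X)$ to get $x',y'$ and $\mathbf n\in\Z^d$ witnessing $(x,y)\in\RP^{[d]}(X)$ at scale $\delta'$. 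Applying $\pi$ and using $\pi\circ T=S\circ\pi$ turns this into a $\delta$-witness for $(\pi x,\pi y)\in\RP^{[d]}(Y)$; as $\delta$ is arbitrary and $\RP^{[d]}(Y)$ is closed, $(\pi x,\pi y)\in\RP^{[d]}(Y)$.

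The reverse inclusion is the lifting problem and is where the real work lies. First I would record that $\pi^{[d+1]}$ is equivariant for the face groups, so for any $x\in X$ with $a=\pi(x)$ the image $\pi^{[d+1]}(\overline{\F^{[d+1]}}(x^{[d+1]}))$ is a closed $\F^{[d+1]}$-invariant subset of $Y^{[d+1]}$ containing $a^{[d+1]}$; as it is contained in, yet contains, the minimal set $\overline{\F^{[d+1]}}(a^{[d+1]})$, we get
\[
\pi^{[d+1]}\bigl(\overline{\F^{[d+1]}}(x^{[d+1]})\bigr)=\overline{\F^{[d+1]}}(a^{[d+1]}).
\]
Now for $(a,b)\in\RP^{[d]}(Y)$ and any $x\in\pi^{-1}(a)$, Theorem~\ref{thm-1}(1) applied to $Y$ gives $(a,b,\ldots,b)\in\overline{\F^{[d+1]}}(a^{[d+1]})$, so by the surjectivity above there is $\mathbf w\in\overline{\F^{[d+1]}}(x^{[d+1]})$ with $\pi^{[d+1]}\mathbf w=(a,b,\ldots,b)$; thus $w_{\mathbf 0}=x$ and $\pi(w_\epsilon)=b$ for every $\epsilon\neq\mathbf 0$.

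The remaining step, and the main obstacle, is to upgrade $\mathbf w$ to a point of the special form $(x,y,\ldots,y)$ still lying in $\overline{\F^{[d+1]}}(x^{[d+1]})$; once this is done, Theorem~\ref{thm-1}(1) yields $(x,y)\in\RP^{[d]}(X)$ with $\pi(x)=a$ and $\pi(y)=b$, completing (1). I expect this collapse of the non-first coordinates onto a common value $y\in\pi^{-1}(b)$ to be the crux: the set of special-form points is not $\F^{[d+1]}$-invariant, so minimality cannot be applied to it directly. I would carry it out inside the minimal system $(\overline{\F^{[d+1]}}(x^{[d+1]}),\F^{[d+1]})$ through its enveloping semigroup, using minimal idempotents (available since every point of a minimal system is almost periodic) to align the fibre coordinates $w_\epsilon$ without leaving the orbit closure; this is precisely the machinery underlying Theorem~\ref{thm-1}.

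Finally I would deduce (2) and the last assertion formally from (1). Writing $R_\pi=\{(x,x'):\pi(x)=\pi(x')\}$: if $\RP^{[d]}(X)\subseteq R_\pi$, then by (1) each $(a,b)\in\RP^{[d]}(Y)$ equals $\pi\times\pi$ of some $(x,y)\in\RP^{[d]}(X)\subseteq R_\pi$, forcing $a=b$, so $\RP^{[d]}(Y)=\Delta_Y$ and $Y$ has order $d$ by Theorem~\ref{thm-1}(3); conversely, if $Y$ has order $d$ then $\RP^{[d]}(Y)=\Delta_Y$ and the easy inclusion gives $\RP^{[d]}(X)\subseteq R_\pi$. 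For the closing statement, $\RP^{[d]}(X)$ is a closed invariant equivalence relation by Theorem~\ref{thm-1}(2), so $W=X/\RP^{[d]}(X)$ is a minimal factor with $R_q=\RP^{[d]}(X)$; by (2) it is a system of order $d$, and for any factor $\rho\colon X\to Z$ onto a system of order $d$, (2) gives $\RP^{[d]}(X)\subseteq R_\rho$, so $\rho$ factors through $q$. Hence $W$ is the maximal factor of order $d$, i.e. the maximal $d$-step pro-nilfactor.
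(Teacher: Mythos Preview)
The paper does not supply its own proof of this theorem: it is stated with the citation \cite{SY} and used as a black box. Consequently there is no in-paper argument to compare your proposal against.

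Taken on its own, your outline is sound in its architecture and the formal deductions of part~(2) and of the ``in particular'' clause from part~(1) and Theorem~\ref{thm-1} are correct. The easy inclusion in~(1) is fine, and the surjectivity $\pi^{[d+1]}\bigl(\overline{\F^{[d+1]}}(x^{[d+1]})\bigr)=\overline{\F^{[d+1]}}(a^{[d+1]})$ via minimality is the right first move for the reverse inclusion.

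However, the step you yourself flag as the main obstacle---collapsing the lifted point $\mathbf w=(x,w_\epsilon)_{\epsilon\neq\mathbf 0}$ with all $w_\epsilon\in\pi^{-1}(b)$ to a point of the form $(x,y,\ldots,y)$ while remaining in $\overline{\F^{[d+1]}}(x^{[d+1]})$---is not actually carried out. Saying you would ``use minimal idempotents in the enveloping semigroup'' names the toolbox but not the argument: one must explain why acting by a suitable idempotent $u$ (or a sequence of them) simultaneously fixes the $x$-coordinate and identifies the remaining $2^{d+1}-1$ coordinates, all of which lie in the same fibre $\pi^{-1}(b)$ but are a~priori distinct. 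In \cite{SY} this is done through a careful inductive/structural argument exploiting the specific combinatorics of the face action and the characterisation in Theorem~\ref{thm-1}(1); it is not a one-line consequence of the existence of minimal idempotents. As written, your proposal has a genuine gap at exactly this point.
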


Let $Z_d=X/\RP^{[d]}(X,T)$ and $\pi_d: (X,T)\rightarrow (Z_d,T_d)$ be the factor map. $Z_0$ is the trivial system and $Z_1$ is the maximal equicontinuous factor $X_{eq}$.

\subsection{Some fundamental extensions}\

\medskip

Let $(X,T)$ and $(Y,S)$ be TDS and let $\pi: X \to Y$ be a factor map.
One says that $\pi$ is an {\it open} extension if it is open as a map;
and $\pi$ is a {\it semi-open} extension if the image of
every nonempty open set of $X$ has nonempty interior.
An important fact is that any factor map of minimal systems is semi-open.

\medskip

An extension $\pi$ is  {\it proximal} if
$\pi(x_1)=\pi(x_2)$ implies $(x_1,x_2) \in {\bf P}(X,T)$, and $\pi$ is   {\it distal} if $\pi(x_1)=\pi(x_2)$ and $x_1\neq x_2$ implies $(x_1,x_2) \not \in  {\bf P}(X,T)$. An extension $\pi$ is {\it almost one to one}  if there
exists a dense $G_\d$ set $X_0\subseteq X$ such that
$\pi^{-1}(\{\pi(x)\})=\{x\}$ for any $x\in X_0$. It is easy to see that any almost one to one extension between minimal systems is proximal.




\medskip

An extension $\pi$ between minimal systems is called a {\em relatively incontractible (RIC) extension}\ if it is open and for every $n \ge 1$
the minimal points are dense in the relation
$$
R^n_\pi = \{(x_1,\dots,x_n) \in X^n : \pi(x_i)=\pi(x_j),\ \forall \ 1\le i
\le j \le n\}.
$$

A distal extension between minimal systems is RIC and that a RIC
extension is open. Every factor map between minimal systems can be
lifted to a RIC extension by proximal extensions (see \cite{EGS} or \cite[Chapter VI]{Vr}).

\begin{thm}\label{RIC}
Given a factor map $\pi:X\rightarrow Y$ between minimal systems
$(X,T)$ and $(Y,S)$ there exists a commutative diagram of factor
maps (called {\em RIC-diagram} or {\em EGS-diagram}\footnote{EGS stands for Ellis, Glasner and Shapiro \cite{EGS}.})

\[
\begin{CD}
X @<{\theta '}<< X'\\
@VV{\pi}V      @VV{\pi'}V\\
Y @<{\theta}<< Y'
\end{CD}
\]
such that
\begin{enumerate}
  \item[(a)] $\theta '$ and $\theta$ are proximal extensions;
  \item[(b)] $\pi '$ is a RIC extension;
  \item[(c)] $X '$ is the unique minimal set in $R_{\pi \theta
}=\{(x,y)\in X\times Y ': \pi(x)=\theta(y)\}$ and $\theta '$ and
$\pi '$ are the restrictions to $X '$ of the projections of $X\times
Y '$ onto $X$ and $Y '$ respectively.
\end{enumerate}
\end{thm}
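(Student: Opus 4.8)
This is the classical ``RIC-ification'' theorem of Ellis--Glasner--Shapiro, and I would reprove it via the hyperspace (quasi-factor) construction. Fix a minimal left ideal $M$ and a minimal idempotent $u\in M$ in $\beta\Z$, so that $u$ acts on every $\Z$-flow; choose $x_0\in X$ with $ux_0=x_0$ and put $y_0=\pi(x_0)$, so $uy_0=y_0$. Work in the hyperspace flow $(2^X,T)$ of nonempty closed subsets of $X$ with the Vietoris topology; the map $\pi_*\colon 2^X\to 2^Y$, $A\mapsto\pi(A)$, is a flow homomorphism carrying the closed invariant copy of $X$ formed by the singletons onto the corresponding copy of $Y$. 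Let $F_0\in 2^X$ be the image of the fibre $\pi^{-1}(y_0)$ under $u$ (equivalently the ``circle'' image $\overline{u\,\pi^{-1}(y_0)}$). Since $u$ is a minimal idempotent, $F_0$ is almost periodic in $2^X$, so $Y':=\overline{\mathcal{O}(F_0,T)}$ is a minimal subflow of $2^X$. One checks $x_0\in F_0$ and $\pi_*(F_0)=\{y_0\}$, hence $\pi_*(Y')$ lies in the copy of $Y$ inside $2^Y$; thus every $A\in Y'$ is contained in a single $\pi$-fibre, and $\theta(A):=$ the unique $y$ with $\pi(A)=\{y\}$ defines a homomorphism $\theta\colon Y'\to Y$ of minimal flows.

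Next I would prove $\theta$ is proximal: if $\theta(A_1)=\theta(A_2)=y$, choose a minimal idempotent $v$ with $vy=y$; since $A_1$ and $A_2$ are both, by construction, $u$-images of the full fibre over $y_0$ carried into $\pi^{-1}(y)$, one checks they are proximal in $Y'$ --- this is exactly where the specific form of $F_0$ is used. Granting this, let $X'$ be a minimal subset of $R_{\pi\theta}=\{(x,A)\in X\times Y':\pi(x)=\theta(A)\}$ and let $\theta',\pi'$ be the two coordinate projections; the square commutes automatically, since $\pi\theta'$ and $\theta\pi'$ both equal ``apply $\pi$ to the $X$-coordinate''. Proximality of $\theta$ makes $X'$ the \emph{unique} minimal subset of $R_{\pi\theta}$: two such minimal sets both project onto the minimal flow $X$, so one can pick points $(x,A_1)$, $(x,A_2)$ in them with equal first coordinate; then $(A_1,A_2)$ is proximal, and pushing the second coordinates together along a net (and passing to a convergent subnet in the first) yields a point common to both minimal sets. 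The same net argument shows $\theta'\colon X'\to X$ is proximal, and $\theta'$, $\pi'$ are surjective because their images are subflows of the minimal flows $X$, $Y'$. This gives conclusions (a) and (c), and the surjectivity required for the diagram.

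It remains to prove (b), that $\pi'$ is RIC; this is the heart of the matter. One first identifies, for each $A\in Y'$, the $\pi'$-fibre $\{x\in X:(x,A)\in X'\}$ with the closed set $A$ itself regarded inside $X$; since $A\mapsto A$ is continuous in the Vietoris topology this gives openness of $\pi'$ at once, and it identifies $R^n_{\pi'}$ with $\{(x_1,\dots,x_n,A):A\in Y',\ x_1,\dots,x_n\in A\}$. Density of the minimal points in each $R^n_{\pi'}$ is then obtained from the fact that $F_0$ is a $u$-image of a \emph{full} fibre: given an arbitrary such tuple, translate it so that $A$ is close to $F_0$, approximate the translated $X$-coordinates by $u$-images of points of $\pi^{-1}(y_0)$, and apply $u$ to land on a tuple over $F_0$ that is almost periodic (because $u$ is a minimal idempotent acting on $X^n\times 2^X$) and arbitrarily close to the original. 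I expect this verification of the RIC property --- matching the ``size'' of the fibres of $\pi'$ against density of almost periodic points in all the relations $R^n_{\pi'}$ --- to be the main obstacle; everything preceding it is comparatively routine once the idempotent setup is fixed, and the details of this step are precisely those carried out in \cite{EGS} and \cite[Chapter VI]{Vr}. Alternatively one may run the whole argument in the language of the Ellis group $\mathfrak{G}=uM$: the extension $\pi$ corresponds to an inclusion $\mathfrak{G}(X)\subseteq\mathfrak{G}(Y)$ of $\tau$-closed subgroups, one replaces $\mathfrak{G}(X)$ by a suitable $\tau$-closed subgroup making the extension ``as open as possible'', and checks that the resulting extension is RIC; I would turn to this reformulation only if the hyperspace bookkeeping became unwieldy.
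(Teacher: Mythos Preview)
The paper does not prove this theorem at all: it is stated as a background result, with the parenthetical remark ``see \cite{EGS} or \cite[Chapter VI]{Vr}'' immediately preceding it, and no proof is given in the text. Your proposal is precisely a sketch of the standard Ellis--Glasner--Shapiro construction via the hyperspace flow and a minimal idempotent, which is exactly the argument found in those references; so in effect you are supplying the proof that the paper chose to outsource, and your outline matches the cited sources rather than anything in the paper itself.
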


\section{Topological characteristic factors along cubes}

Let $(X,T)$ be a minimal system and $d\in\N$. By Theorem \ref{th3}, $(Z_d,T_d)=(X/\RP^{[d]}(X),T_d)$ is the maximal $d$-step pro-nilfactor of $(X,T)$. For convenience, we still use the symbol $T$ as action on $Z_d$ instead of $T_d$, that is, $(Z_d,T)$ is the maximal $d$-step pro-nilfactor of $(X,T)$. Let $\pi_d: (X,T)\rightarrow (Z_d,T)$ be the factor map.

\medskip

In this section we will prove the main results of the paper. First we will show that modulo proximal extensions $\Q^{[d]}(X)$ is $\pi_{d-1}^{[d]}$-saturated. Then using this result we will prove that modulo proximal extensions the maximal $(d-1)$-step pro-nilfactor $(Z_{d-1}, T)$ is the topological cubic
characteristic factor of order $d$ of $(X,T)$.

\subsection{Parallelepiped $\Q^{[d]}$}\

\medskip

First we have the following useful lemma, which gives a condition when a point ${\bf x}\in X^{[d]}$ may be in $\Q^{[d]}(X)$.

\begin{lem}\label{lem1}
Let $(X,T)$ be a minimal system and $d\in\N$. Let $\pi:(X,T)\rightarrow (Z_{d-1},T)$ be the factor map to the maximal $(d-1)$-step pro-nilfactor. If points $x_1,x_2,\ldots, x_{2^d}\in X$ satisfy the following coditions:
\begin{enumerate}
 \item ${\bf x}=(x_1,x_2,\ldots,x_{2^d}) \in R_\pi^{2^d},$ that is, $\pi(x_1)=\pi(x_2)=\ldots =\pi(x_{2^d})$;
 \item $p_{1}:\overline{\mathcal{O}}({\bf x},T^{[d]})\rightarrow X$
is semi-open, where $p_{1}$ is the projection to the first coordinate,
\end{enumerate}
then $\{x_1,x_2,\ldots,x_{2^d}\}^{[d]}\subset \Q^{[d]}(X)$.
In particular, ${\bf x}=(x_1,x_2,\ldots,x_{2^d})\in \Q^{[d]}(X)$.
\end{lem}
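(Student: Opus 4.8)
\textbf{Proof strategy for Lemma \ref{lem1}.}

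The plan is to prove the statement by induction on $d$. The case $d=1$ is essentially trivial: condition (1) says $\pi(x_1)=\pi(x_2)$ with $\pi$ the map to $Z_0$, the trivial system, so this is vacuous; and $\Q^{[1]}(X)=X^{[1]}=X^2$ anyway (since $\Q^{[1]}$ is the closure of $\{(x,T^nx)\}$, which for minimal $X$ is all of $X^2$), so there is nothing to prove. (Alternatively one may take $d=1$ with $Z_0$ trivial and note $\{x_1,x_2\}^{[1]}\subset X^2 = \Q^{[1]}$ is automatic.) So assume the statement holds for $d-1$ and we prove it for $d$.

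For the inductive step, I would decompose ${\bf x}=({\bf x}',{\bf x}'')$ with ${\bf x}',{\bf x}''\in X^{[d-1]}$ as in the preliminaries, where ${\bf x}'=(x_{\ep 0}:\ep\in\{0,1\}^{d-1})$ and ${\bf x}''=(x_{\ep 1}:\ep\in\{0,1\}^{d-1})$. The key structural fact to exploit is Theorem \ref{thm-1}(1): $(a,b)\in\RP^{[d-1]}$ iff $(a,b,\ldots,b)\in\overline{\F^{[d]}}(a^{[d]})$, together with the fact that $\Q^{[d]}(X)$ is the closure of $\{Sx^{[d]}:S\in\F^{[d]},x\in X\}$ and the ``gluing'' description of $\Q^{[d]}$: a point $({\bf y}',{\bf y}'')\in X^{[d-1]}\times X^{[d-1]}$ lies in $\Q^{[d]}(X)$ provided ${\bf y}',{\bf y}''\in\Q^{[d-1]}(X)$ and $({\bf y}',{\bf y}'')$ can be realized as a limit of parallelepipeds whose ``top face'' and ``bottom face'' are joined by a common $T^n$. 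Concretely, I would first use hypothesis (2), semi-openness of the projection $p_1:\overline{\O}({\bf x},T^{[d]})\to X$, to find, for a residual set of points $z$ near $x_1$ (more precisely, using that minimal systems are semi-open and a Baire-category/almost-one-to-one argument), a sequence $T^{[d]}$-iterates of ${\bf x}$ whose first coordinate returns close to $x_1$; this lets one bring ${\bf x}$ back near the diagonal in a controlled way. Then I would separately analyze ${\bf x}'$ and ${\bf x}''$: since $\pi(x_i)$ are all equal (condition (1)) and $\pi$ is the map to $Z_{d-1}$, each pair $(x_1,x_i)\in R_\pi\subseteq \RP^{[d-1]}$, so I can feed the $(d-1)$-dimensional data into the induction hypothesis — but to do so I must verify the analogue of hypothesis (2) at level $d-1$ for the orbit closures of ${\bf x}'$ and ${\bf x}''$ under $T^{[d-1]}$, which should follow from semi-openness at level $d$ by projecting.

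The main obstacle, as I see it, is precisely propagating hypothesis (2) through the induction: semi-openness of $p_1$ on $\overline{\O}({\bf x},T^{[d]})$ does not obviously restrict to semi-openness of the corresponding projection on $\overline{\O}({\bf x}',T^{[d-1]})$, because the orbit closure of ${\bf x}'$ under the smaller group $T^{[d-1]}$ may be strictly larger than the projection of $\overline{\O}({\bf x},T^{[d]})$, and openness is not preserved under passing to subsystems in general. I expect this to require a genericity argument: replace ${\bf x}$ by a point in a suitable dense $G_\delta$ set (using that $\overline{\O}({\bf x},T^{[d]})$ is a minimal-point-rich set, or working with almost periodic points and the Ellis semigroup) so that the relevant projections are simultaneously semi-open, and then show the conclusion $\{x_1,\ldots,x_{2^d}\}^{[d]}\subset\Q^{[d]}(X)$ is a closed condition that therefore passes to ${\bf x}$ itself. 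The remaining steps — assembling the two $(d-1)$-dimensional parallelepipeds into a $d$-dimensional one via a common face transformation, and deducing ${\bf x}\in\Q^{[d]}(X)$ as the special case where all prescribed coordinates collapse to $\{x_1,\ldots,x_{2^d}\}$ — I expect to be routine once the inductive hypothesis is correctly applicable, using the face-group structure \eqref{def-T[d]} and the minimality of $(\overline{\F^{[d]}}(x^{[d]}),\F^{[d]})$ recalled in the preliminaries.
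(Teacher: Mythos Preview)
Your induction on $d$ runs into precisely the obstacle you flag, and the proposed repair does not close it. Pushing the semi-openness hypothesis~(2) from $\overline{\O}({\bf x},T^{[d]})$ down to $\overline{\O}({\bf x}',T^{[d-1]})$ has no reason to work: the projection of the $d$-orbit closure need not equal the $(d-1)$-orbit closure, and semi-openness is not inherited by either operation. Your genericity fix --- perturb ${\bf x}$ into a residual set and then use closedness --- fails because the conclusion $\{x_1,\dots,x_{2^d}\}^{[d]}\subset\Q^{[d]}(X)$ is a statement about the \emph{given} points $x_1,\dots,x_{2^d}$; perturbing ${\bf x}$ changes the target set, so there is no fixed closed condition to pass to the limit. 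Finally, even granting ${\bf x}',{\bf x}''\in\Q^{[d-1]}(X)$, the ``gluing'' step is not routine: two $(d-1)$-parallelepipeds almost never assemble into a $d$-parallelepiped, and your sketch offers no mechanism beyond a vague ``common $T^n$'' to produce the required compatibility.

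The paper sidesteps both issues by working at a \emph{fixed} $d$ throughout and inducting instead on an internal counter. For ${\bf y}\in\{x_1,\dots,x_{2^d}\}^{[d]}$ let $l({\bf y})$ be the number of coordinates equal to $x_1$; one downward-inducts on $l$, the base case $l=2^d$ being the diagonal $x_1^{[d]}\in\Q^{[d]}$. The inductive engine is a single Claim: if $(x_1,\alpha_*)\in\{x_1,\dots,x_{2^d}\}^{[d]}\cap\Q^{[d]}$ then $(x_i,\alpha_*)\in\Q^{[d]}$ for every $i$. Its proof uses semi-openness of $p_1$ directly at level $d$ (to synchronize returns of all $2^d$ coordinates into prescribed neighborhoods) together with Theorem~\ref{thm-1}(1), which from $(x_1,x_i)\in R_\pi\subset\RP^{[d-1]}$ supplies the auxiliary cube $(x_i,x_1,\dots,x_1)\in\Q^{[d]}$; an elementary intersection argument then combines the two to swap $x_1$ for $x_i$ in the first slot. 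Since $\Q^{[d]}$ is invariant under Euclidean permutations of the cube, one can move any coordinate to the first slot, and the Claim drives the induction on $l$. Semi-openness is thus used once, at the ambient level $d$, and never has to be propagated to lower dimensions.
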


\begin{proof}
We first prove the following claim.

\medskip
\noindent {\bf Claim:} {\em If $(x_1,\alpha_*)\in\{x_1,x_2,\ldots,x_{2^d}\}^{[d]}\cap \Q^{[d]}(X)$, then
\begin{equation*}
(x_i,\alpha_*)\in \Q^{[d]}(X) \ \text{ for any } i\in\{1,2,\ldots,2^d\}.
\end{equation*}
}

\begin{proof}[Proof of Claim]\renewcommand{\qedsymbol}{}
Fix an $i_0\in\{1,2,\ldots,2^d\}$, we will show that $(x_{i_0},\alpha_*)\in \Q^{[d]}(X)$.
Let $U_1,U_2,\ldots,U_{2^d}$ be neighborhoods of $x_1,x_2\ldots,x_{2^d}$ respectively.
Since $p_1$ is semi-open, one has that
\begin{equation*}
V_1={\rm int}\ p_1((U_1\times U_2\cdots\times U_{2^d})\cap \overline{\mathcal{O}}({\bf x},T^{[d]}))\neq\emptyset.
\end{equation*}
It is obvious that $V_1\subset U_1$. Set $V_2=U_2,\ldots,V_{2^d}=U_{2^d}$. Let $(x_1,\alpha_*)=(x_{s(\epsilon)})_{\epsilon\in\{0,1\}^d}$, where $s: \{0,1\}^d\rightarrow \{1,2,\ldots,2^d\}$ be a map with $s(\mathbf{0})=1$.

By the definition of $V_1$, it is easy to see that
\begin{equation*}
(V_1\times V_2\cdots\times V_{2^d})\cap{\mathcal{O}}({\bf x},T^{[d]})\neq\emptyset.
\end{equation*}
So  there exists $n_0\in\Z$ such that
 \begin{equation*}
 x_1\in T^{-n_0}V_1,x_2\in T^{-n_0}V_2,\ldots,x_{2^d}\in T^{-n_0}V_{2^d},
 \end{equation*}
then $(x_1,\alpha_*)=(x_{s(\epsilon)})_{\epsilon\in\{0,1\}^d}\in\prod\limits_{\epsilon\in\{0,1\}^d} T^{-n_0}V_{s(\epsilon)}$.

From the hypothesis, $(x_1,\alpha_*)\in \Q^{[d]}(X)$, then  there exists $\mathbf{n}\in\Z^d$ and  $x\in X$ such that
\begin{equation*}
    (T^{\mathbf{n}\cdot\epsilon}x)_{\epsilon\in\{0,1\}^d}\in\prod\limits_{\epsilon\in\{0,1\}^d} T^{-n_0}V_{s(\epsilon)}.
\end{equation*}
It follows that $\bigcap\limits_{\epsilon\in\{0,1\}^d} T^{-\mathbf{n}\cdot\epsilon}V_{s(\epsilon)}\neq\emptyset$. Set $W_1=\bigcap\limits_{\epsilon\in\{0,1\}^d} T^{-\mathbf{n}\cdot\epsilon}V_{s(\epsilon)}$
and $W_2=U_2,\ldots,W_{2^d}=U_{2^d}$. Note that $W_1\subset V_1$.

Since $W_1\subset V_1$, by the definition of $V_1$ one has that
\begin{equation*}
 (W_1\times W_2\cdots\times W_{2^d})\cap{\mathcal{O}}({\bf x},T^{[d]})\neq\emptyset.
 \end{equation*}
Thus there exists $n_1\in\Z$ such that
\begin{equation*}
  x_1\in T^{-n_1}W_1,x_2\in T^{-n_1}W_2,\ldots,x_{2^d}\in T^{-n_1}W_{2^d}.
 \end{equation*}
Since $(x_1,x_{i_0})\in R_\pi\subset \RP^{[d-1]}(X),$ by Theorem \ref{thm-1}
  \begin{equation*}
 (x_{i_0},(x_1^{[d]})_*)=(x_{i_0},x_1,\ldots,x_1)\in \Q^{[d]}(X).
 \end{equation*}
Note that $(x_{i_0},x_1,\ldots,x_1)\in T^{-n_1}W_{i_0}\times T^{-n_1}W_1\cdots\times T^{-n_1}W_1$, so there exist some $ \mathbf{m}\in\Z^d$ and $x^\prime\in X$ such that
    \begin{equation*}
 (T^{\mathbf{m}\cdot\epsilon}x^\prime)_{\epsilon\in\{0,1\}^d}\in T^{-n_1}W_{i_0}\times T^{-n_1}W_1\cdots\times T^{-n_1}W_1.
 \end{equation*}
   It follows that
    \begin{equation*}
W_{i_0}\cap\bigcap\limits_{\epsilon\in\{0,1\}^d\setminus\{\bf 0\}}T^{-\mathbf{m}\cdot\epsilon}W_1\neq\emptyset.
 \end{equation*}
 Since $W_1=\bigcap\limits_{\epsilon\in\{0,1\}^d} T^{-\mathbf{n}\cdot\epsilon}V_{s(\epsilon)}$, one has that
 \begin{equation*}
W_{i_0}\cap \bigcap_{\epsilon\in \{0,1\}^d\setminus \{\bf 0\}}T^{-{\bf m}\cdot \epsilon}\bigcap _{\eta\in \{0,1\}^d} T^{- {\bf n}\cdot \eta}V_{s(\eta)} \neq \emptyset.
 \end{equation*}
In particular, one has that
 \begin{equation*}
W_{i_0}\cap \bigcap_{\epsilon\in \{0,1\}^d\setminus \{\bf 0\}}T^{-({\bf m}+{\bf n})\cdot \epsilon}V_{s(\epsilon)}\neq \emptyset.
\end{equation*}
Since $W_i\subset V_i\subset U_i,  i\in\{1,2,\ldots 2^d\}$, it follows that
 \begin{equation*}
U_{i_0}\cap \bigcap_{\epsilon\in \{0,1\}^d\setminus \{\bf 0\}}T^{-({\bf m}+{\bf n})\cdot \epsilon}U_{s(\epsilon)}\neq \emptyset.
\end{equation*}
 Note that $U_i$ is arbitrary for each $i\in \{1,2,\ldots, 2^d\}$, by definition one has that $(x_{i_0},\alpha_*)\in \Q^{[d]}(X)$.
The proof of claim is completed.
\end{proof}

Now we begin to prove the lemma.
Let $\mathbf{y}\in\{x_1,x_2,\ldots,x_{2^d}\}^{[d]}$ and $l(\mathbf{y})$ denote the number of $x_1$'s appearing in $\mathbf{y}$. We prove the lemma by induction on $l({\bf y})$. If $l(\mathbf{y})=2^d,$ then $ \mathbf{y}=(x_1,x_1,\ldots,x_1)\in \Q^{[d]}(X).$

Assume that ${\bf y}\in \Q^{[d]}(X)$ whenever $l(\mathbf{y})= k\geq1$. We show that if $l(\mathbf{y})=k-1$ then ${\bf y}=(y_\epsilon)_{\epsilon\in \{0,1\}^d}\in \Q^{[d]}(X)$. Since $l({\bf y})=k-1<2^d$, there exists $\epsilon_0\in \{0,1\}^d$ such that $y_{\epsilon_0}\neq x_1$.

Let $I_0=\{i:(\epsilon_0)_i=0\}; I_1=\{i:(\epsilon_0)_i=1\}$ and define
$$
\phi:\{0,1\}^{[d]}\rightarrow\{0,1\}^{[d]}, (\phi(\epsilon))_i=\left\{
             \begin{array}{ll}
               \epsilon_i & i\in I_0 \hbox{;} \\
               1-\epsilon_i & i\in I_1 \hbox{.}
             \end{array}
           \right.$$
Then $\phi^*:X^{[d]}\rightarrow X^{[d]}:(\phi^*\mathbf{x})_\epsilon=\mathbf{x}_{\phi(\epsilon)}$ is an Euclidean permutation. Note that $\Q^{[d]}(X)$ is invariant under $\phi^*,(\phi^*)^{-1}$.

By the definition of $\phi$, $(\phi^*\mathbf{y})_{\bf 0}={y}_{\epsilon_0}\neq x_1$. Let ${\bf z}=(z_\epsilon)_{\epsilon\in \{0,1\}^d}$ with $ z_{\bf 0}=x_1$ and $z_\epsilon =(\phi^*\mathbf{y})_\epsilon$ for all $\epsilon\in \{0,1\}^d\setminus \{\bf 0\}$. Then $\mathbf{z}\in\{x_1,x_2,\ldots,x_{2^d}\}^{[d]}$ and $l(\mathbf{z})=k.$ By the inductive assumption $\mathbf{z}=(x_1,{\bf z}_*)\in Q^{[d]}(X)$. By claim,  $\phi^*\mathbf{y}\in \Q^{[d]}(X)$. Thus $\mathbf{y}=(\phi^*)^{-1}(\phi^*\mathbf{y})\in \Q^{[d]}(X)$. The proof is completed.
\end{proof}

By Lemma \ref{lem1}, one has the following corollary immediately.

\begin{cor}\label{cor-s1}
Let $(X,T)$ be a minimal system and $d\in \N$. Let $\pi:(X,T)\rightarrow (Z_{d-1},T)$ be the factor map to the maximal $(d-1)$-step pro-nilfactor. If ${\bf x}\in R_\pi^{2^d}$ is a $T^{[d]}$-minimal point, then ${\bf x}\in \Q^{[d]}(X)$.
\end{cor}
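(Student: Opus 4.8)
The plan is to obtain this corollary as an immediate consequence of Lemma~\ref{lem1}. Write ${\bf x}=(x_1,x_2,\ldots,x_{2^d})$. Condition~(1) of Lemma~\ref{lem1}, namely ${\bf x}\in R_\pi^{2^d}$, is precisely what is assumed here, so the only thing that needs checking is condition~(2): that the projection $p_1:\overline{\mathcal{O}}({\bf x},T^{[d]})\to X$ onto the first coordinate is semi-open.

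To verify this I would argue as follows. Since ${\bf x}$ is a $T^{[d]}$-minimal point, the subsystem $(\overline{\mathcal{O}}({\bf x},T^{[d]}),T^{[d]})$ is minimal. The diagonal transformation $T^{[d]}$ acts on the first coordinate as $T$ itself, so $p_1$ intertwines $T^{[d]}$ with $T$; hence $p_1\big(\overline{\mathcal{O}}({\bf x},T^{[d]})\big)$ is a nonempty closed $T$-invariant subset of $X$, and by minimality of $(X,T)$ it equals $X$. Thus $p_1$ is a factor map between the minimal systems $(\overline{\mathcal{O}}({\bf x},T^{[d]}),T^{[d]})$ and $(X,T)$, and by the standard fact recalled in Section~2 that every factor map between minimal systems is semi-open, $p_1$ is semi-open. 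Condition~(2) of Lemma~\ref{lem1} now holds, so Lemma~\ref{lem1} gives $\{x_1,x_2,\ldots,x_{2^d}\}^{[d]}\subset\mathcal{Q}^{[d]}(X)$, and in particular ${\bf x}\in\mathcal{Q}^{[d]}(X)$.

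There is essentially no obstacle in this argument, as it is a direct reduction to Lemma~\ref{lem1}; the only point that deserves a moment's attention is the surjectivity of $p_1$ onto $X$, which is exactly where minimality of $(X,T)$ is used and which upgrades $p_1$ from a mere equivariant continuous map to a factor map of minimal systems.
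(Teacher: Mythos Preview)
Your proof is correct and follows exactly the same approach as the paper: both reduce to Lemma~\ref{lem1} by observing that the $T^{[d]}$-minimality of ${\bf x}$ makes $p_1$ a factor map between minimal systems and hence semi-open. You have simply spelled out the justification for semi-openness that the paper leaves as a one-line remark.
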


\begin{proof}
Since ${\bf x}\in R_\pi^{2^d}$ is a $T^{[d]}$-minimal point, $p_{1}:\overline{\O}({\bf x},T^{[d]})\rightarrow X$ is semi-open. The result follows from Lemma \ref{lem1}.
\end{proof}

Now we have that if the factor map to the maximal $(d-1)$-step pro-nilfactor is RIC, then $\Q^{[d]}(X)$ is $\pi^{[d]}$-saturated.

\begin{prop}\label{prop-RIC}
Let $(X,T)$ be a minimal system and $d\in \N$. Let $\pi:(X,T)\rightarrow (Z_{d-1},T)$ be the factor map to the maximal $(d-1)$-step pro-nilfactor. If $\pi$ is RIC, then
\begin{equation*}
\Q^{[d]}(X)=(\pi^{[d]})^{-1}\Q^{[d]}(Z_{d-1}).
\end{equation*}
\end{prop}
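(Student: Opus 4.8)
The plan is to prove the equality $\Q^{[d]}(X)=(\pi^{[d]})^{-1}\Q^{[d]}(Z_{d-1})$ by double inclusion, with all the work concentrated in the ``$\supseteq$'' direction. The inclusion $\Q^{[d]}(X)\subseteq(\pi^{[d]})^{-1}\Q^{[d]}(Z_{d-1})$ is immediate: since $\pi$ is a factor map, $\pi^{[d]}$ maps the defining generators $(T^{\mathbf{n}\cdot\ep}x)_\ep$ of $\Q^{[d]}(X)$ to the generators $(T^{\mathbf{n}\cdot\ep}\pi(x))_\ep$ of $\Q^{[d]}(Z_{d-1})$, and $\pi^{[d]}$ is continuous, so $\pi^{[d]}(\Q^{[d]}(X))\subseteq\Q^{[d]}(Z_{d-1})$. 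Hence the only real content is showing that every $\mathbf{x}\in X^{[d]}$ with $\pi^{[d]}(\mathbf{x})\in\Q^{[d]}(Z_{d-1})$ already lies in $\Q^{[d]}(X)$.

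For the hard inclusion I would first reduce to points with a minimality property so that Corollary \ref{cor-s1} (or Lemma \ref{lem1}) can be applied. Fix $\mathbf{x}\in X^{[d]}$ with $\mathbf{z}:=\pi^{[d]}(\mathbf{x})\in\Q^{[d]}(Z_{d-1})$. Since $(Z_{d-1},T)$ is minimal, $(\Q^{[d]}(Z_{d-1}),\G^{[d]})$ — and in particular the orbit closure $\overline{\F^{[d]}}(z^{[d]})$ for the relevant base point — is minimal by the results of \cite{SY} recalled in the excerpt, so $\mathbf{z}$ is a $\F^{[d]}$-minimal point of $\Q^{[d]}(Z_{d-1})$; I may also take $\mathbf{z}$ to be $T^{[d]}$-minimal after replacing it inside its orbit closure if necessary. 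The key structural input is now that $\pi$ is RIC: a RIC extension is open, and more importantly the minimal points are dense in each relation $R^n_\pi$. I would use the openness of $\pi$ together with the density of minimal points in $R^{2^d}_\pi$ to lift $\mathbf{z}$ to a $T^{[d]}$-minimal point $\mathbf{x}'\in R^{2^d}_\pi$ with $\pi^{[d]}(\mathbf{x}')=\mathbf{z}$; here one uses that the fibre $(\pi^{[d]})^{-1}(\mathbf{z})$ meets $R^{2^d}_\pi$ precisely because all coordinates of $\mathbf{z}$ are equal is \emph{not} what we want — rather, one works with the lift of $\mathbf{z}$ along $\pi^{[d]}$ sitting over a single $\F^{[d]}$-orbit, and RIC-ness guarantees enough minimal points in the fibre. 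Then Corollary \ref{cor-s1} applies directly to $\mathbf{x}'$: since $\mathbf{x}'\in R^{2^d}_\pi$ and $\mathbf{x}'$ is $T^{[d]}$-minimal, $\mathbf{x}'\in\Q^{[d]}(X)$.

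It remains to pass from this particular lift $\mathbf{x}'$ to the arbitrary lift $\mathbf{x}$. Both $\mathbf{x}$ and $\mathbf{x}'$ project to $\mathbf{z}$, so $\pi(x_\ep)=\pi(x'_\ep)$ for every $\ep$, i.e. $(x_\ep,x'_\ep)\in R_\pi\subseteq\RP^{[d-1]}(X)$ for each coordinate; by Theorem \ref{thm-1} this gives $(x_\ep,x'_\ep,\ldots,x'_\ep)\in\Q^{[d]}(X)$. The idea is then to ``correct'' $\mathbf{x}'$ to $\mathbf{x}$ one coordinate at a time, replacing $x'_\ep$ by $x_\ep$ using exactly the local/combinatorial argument in the Claim inside the proof of Lemma \ref{lem1} (the Euclidean-permutation trick together with the fact that $\Q^{[d]}(X)$ absorbs such single-coordinate changes along $\RP^{[d-1]}$-related points). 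Running this correction over all $2^d$ coordinates shows $\mathbf{x}\in\Q^{[d]}(X)$, completing the reverse inclusion. I expect the main obstacle to be the middle step: arranging a lift of the $T^{[d]}$-minimal point $\mathbf{z}\in\Q^{[d]}(Z_{d-1})$ to a $T^{[d]}$-minimal point in $R^{2^d}_\pi\subseteq X^{[d]}$, since this is where one genuinely needs the full strength of RIC (openness plus density of minimal points in all the relations $R^n_\pi$) rather than just the hypothesis that $\pi$ is a factor map onto the maximal $(d-1)$-step pro-nilfactor; the combinatorial absorption steps, while technical, are essentially a reprise of Lemma \ref{lem1}.
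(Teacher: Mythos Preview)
Your proposal has a genuine gap in the middle step, which you yourself flag as the main obstacle but do not resolve. You want a $T^{[d]}$-minimal point $\mathbf{x}'\in R_\pi^{2^d}$ with $\pi^{[d]}(\mathbf{x}')=\mathbf{z}$, but $R_\pi^{2^d}=(\pi^{[d]})^{-1}(\Delta^{[d]}_{Z_{d-1}})$, so such an $\mathbf{x}'$ can only exist when $\mathbf{z}$ is a diagonal point. For a general $\mathbf{z}\in\Q^{[d]}(Z_{d-1})$ this is impossible, and your proposed alternative (``work with the lift of $\mathbf{z}$ along $\pi^{[d]}$ sitting over a single $\F^{[d]}$-orbit'') is too vague to repair it: Corollary~\ref{cor-s1} only gives information about points of $R_\pi^{2^d}$, nothing about lifts of non-diagonal $\mathbf{z}$. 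Consequently the subsequent coordinate-by-coordinate correction step collapses as well, since you never produce an $\mathbf{x}'\in\Q^{[d]}(X)$ with $\pi^{[d]}(\mathbf{x}')=\pi^{[d]}(\mathbf{x})$; and even if you had one, the Claim inside Lemma~\ref{lem1} needs the semi-open hypothesis on $p_1$, which you have no reason to expect for an arbitrary $\mathbf{x}$.

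The paper's argument sidesteps this by not attacking a general $\mathbf{z}$ directly. It first proves only the special case $R_\pi^{2^d}\subset\Q^{[d]}(X)$: by RIC the $T^{[d]}$-minimal points are dense in $R_\pi^{2^d}$, each such point lies in $\Q^{[d]}(X)$ by Corollary~\ref{cor-s1}, and $\Q^{[d]}(X)$ is closed. The passage from diagonal $\mathbf{z}$ to arbitrary $\mathbf{z}\in\Q^{[d]}(Z_{d-1})$ is then handled not by lifting or coordinate correction but by the group action and openness: since $\pi$ is RIC it is open, hence $\pi^{[d]}$ is open and $(\pi^{[d]})^{-1}$ commutes with closure, so using $\Q^{[d]}(Z_{d-1})=\overline{\G^{[d]}\Delta^{[d]}_{Z_{d-1}}}$ and the $\G^{[d]}$-invariance of $\Q^{[d]}(X)$ one gets
\[
(\pi^{[d]})^{-1}\Q^{[d]}(Z_{d-1})
=(\pi^{[d]})^{-1}\overline{\G^{[d]}\Delta^{[d]}_{Z_{d-1}}}
=\overline{\G^{[d]}(\pi^{[d]})^{-1}\Delta^{[d]}_{Z_{d-1}}}
=\overline{\G^{[d]}R_\pi^{2^d}}
\subset\Q^{[d]}(X).
\]
This use of $\G^{[d]}$ together with openness is the missing idea in your sketch.
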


\begin{proof}
First it is obvious that $(\pi^{[d]})^{-1}\Q^{[d]}(Z_{d-1})\supset \Q^{[d]}(X)$. Now we show the other direction: $(\pi^{[d]})^{-1}\Q^{[d]}(Z_{d-1})\subset \Q^{[d]}(X)$.

If $\textbf{x}\in R_\pi^{2^d}$ is a $T^{[d]}$-minimal point, then by Corollary \ref{cor-s1}, $\textbf{x}\in \Q^{[d]}(X)$. Since $\pi$ is RIC, the set of $T^{[d]}$-minimal points is dense in $R_\pi^{2^d}$ and it follows that $R_\pi^{2^d}=(\pi^{[d]})^{-1}\Delta_{Z_{d-1}}\subset \Q^{[d]}(X)$.
Since $\pi$ is RIC, $\pi^{[d]}$ is open and $(\pi^{[d]})^{-1}$ is continuous. Thus
$$(\pi^{[d]})^{-1}\Q^{[d]}(Z_{d-1})=(\pi^{[d]})^{-1}\overline{\mathcal{G}^{[d]}\Delta_{Z_{d-1}}}=
\overline{\mathcal{G}^{[d]}(\pi^{[d]})^{-1}}\subset \Q^{[d]}(X).$$
The proof is completed.
\end{proof}

\medskip

We point out that we only use the fact that $R_\pi\subset \RP^{[d-1]}(X)$ in the proofs above. Since a distal extension is RIC, we have the following corollary.

\begin{cor}\label{cube-distal}
Let $(X,T)$ be a minimal distal system and $d\in \N$. Let $\pi:(X,T)\rightarrow (Z_{d-1},T)$ be the factor map to the maximal $(d-1)$-step pro-nilfactor. Then
\begin{equation*}
\Q^{[d]}(X)=(\pi^{[d]})^{-1}\Q^{[d]}(Z_{d-1}).
\end{equation*}
\end{cor}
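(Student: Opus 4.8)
The final statement to prove is Corollary \ref{cube-distal}: for a minimal distal system $(X,T)$, with $\pi:(X,T)\to(Z_{d-1},T)$ the factor map to the maximal $(d-1)$-step pro-nilfactor, we have $\Q^{[d]}(X)=(\pi^{[d]})^{-1}\Q^{[d]}(Z_{d-1})$.

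The plan is to deduce this directly from Proposition \ref{prop-RIC} together with the structural fact (recalled in the Preliminaries) that a distal extension between minimal systems is RIC. First I would observe that since $(X,T)$ is distal and minimal, every factor map out of $(X,T)$ is a distal extension; in particular $\pi:(X,T)\to(Z_{d-1},T)$ is a distal extension. Indeed, if $\pi(x_1)=\pi(x_2)$ with $x_1\neq x_2$, then $(x_1,x_2)\notin{\bf P}(X,T)$ because $(X,T)$ is distal, so $\pi$ is a distal extension by the very definition. Second, I would invoke the statement from Subsection 2.7 that a distal extension between minimal systems is RIC. Combining these two observations, $\pi$ is a RIC extension, and then Proposition \ref{prop-RIC} applies verbatim to give $\Q^{[d]}(X)=(\pi^{[d]})^{-1}\Q^{[d]}(Z_{d-1})$.

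An alternative, more self-contained route (foreshadowed by the remark immediately preceding the corollary) is to note that the proof of Proposition \ref{prop-RIC} uses only two ingredients: that $R_\pi\subseteq\RP^{[d-1]}(X)$, and that $T^{[d]}$-minimal points are dense in $R_\pi^{2^d}$ together with openness of $\pi^{[d]}$. The inclusion $R_\pi\subseteq\RP^{[d-1]}(X)$ holds automatically here because $Z_{d-1}=X/\RP^{[d-1]}(X)$ by Theorem \ref{th3}. For the density and openness: distality of $X$ forces every point of $X^{[d]}$ to be $T^{[d]}$-minimal (a product of distal systems is distal, hence pointwise minimal), so $R_\pi^{2^d}$ consists entirely of $T^{[d]}$-minimal points; and a distal extension is open, so $\pi^{[d]}$ is open as a finite product of open maps. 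One then repeats the two-line argument of Proposition \ref{prop-RIC}: $R_\pi^{2^d}=(\pi^{[d]})^{-1}\Delta_{Z_{d-1}}\subseteq\Q^{[d]}(X)$ by Corollary \ref{cor-s1}, and applying $\overline{\G^{[d]}(\cdot)}$ and using continuity of $(\pi^{[d]})^{-1}$ and $\G^{[d]}$-invariance yields $(\pi^{[d]})^{-1}\Q^{[d]}(Z_{d-1})\subseteq\Q^{[d]}(X)$; the reverse inclusion is trivial.

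There is essentially no obstacle here, since the corollary is designed as an immediate consequence of Proposition \ref{prop-RIC}. The only point requiring a moment of care is the justification that $\pi$ is RIC — one must cite the correct fact (distal $\Rightarrow$ RIC for minimal extensions) rather than try to verify density of minimal points in all the relations $R_\pi^n$ by hand; but in the distal case this density is itself immediate because all points are minimal. I would therefore present the short proof via Proposition \ref{prop-RIC} as the main line, perhaps adding a sentence pointing to the self-contained variant for the reader who prefers it.
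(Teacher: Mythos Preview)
Your proposal is correct and follows exactly the paper's approach: the corollary is stated immediately after the remark that a distal extension is RIC, so one simply observes that $\pi$ is distal (hence RIC) and applies Proposition \ref{prop-RIC}. Your alternative self-contained variant is also valid and matches the spirit of the paper's preceding remark that only $R_\pi\subseteq\RP^{[d-1]}(X)$ is really used.
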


\begin{prop}
$Z_{d-1}$ is the minimal factor such that Corollary \ref{cube-distal} holds.

\end{prop}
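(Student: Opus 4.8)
The plan is to show that $Z_{d-1}$ is the \emph{maximal} factor of a minimal distal $X$ for which the cube structure is $\pi$-saturated in the sense of Corollary \ref{cube-distal}; equivalently, if $\rho:(X,T)\to(W,T)$ is a factor map of minimal systems with $\Q^{[d]}(X)=(\rho^{[d]})^{-1}\Q^{[d]}(W)$, then $W$ is a system of order $d-1$, hence $W$ is a factor of $Z_{d-1}$ by Theorem \ref{th3}. (Combined with Corollary \ref{cube-distal}, which says $Z_{d-1}$ itself has this property, this is exactly the assertion that $Z_{d-1}$ is the minimal such factor, read contravariantly: it is the smallest factor that works, because any working factor is intermediate between $X$ and $Z_{d-1}$.) So the real content is the implication $\Q^{[d]}(X)=(\rho^{[d]})^{-1}\Q^{[d]}(W)\Rightarrow \RP^{[d-1]}(W)=\Delta_W$.

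The key steps are as follows. First I would invoke the characterization in Theorem \ref{thm-1}(1): a pair $(w,w')\in W^2$ lies in $\RP^{[d-1]}(W)$ iff $(w,w',\ldots,w')\in\Q^{[d]}(W)$. So suppose $(w,w',\ldots,w')\in\Q^{[d]}(W)$ with $w\neq w'$; I must derive a contradiction. Pick $x,x'\in X$ with $\rho(x)=w$, $\rho(x')=w'$; then the point ${\bf x}=(x,x',x',\ldots,x')\in X^{[d]}$ satisfies $\rho^{[d]}({\bf x})=(w,w',\ldots,w')\in\Q^{[d]}(W)$, so by hypothesis ${\bf x}\in\Q^{[d]}(X)$. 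By Theorem \ref{thm-1}(1) again (this time applied in $X$), ${\bf x}=(x,x',\ldots,x')\in\Q^{[d]}(X)$ forces $(x,x')\in\RP^{[d-1]}(X)$. Now apply Theorem \ref{th3}(1), $\rho\times\rho(\RP^{[d-1]}(X))=\RP^{[d-1]}(W)$ — wait, this does not immediately give a contradiction, so the argument needs one more ingredient: I should instead argue that $\Q^{[d]}(X)=(\rho^{[d]})^{-1}\Q^{[d]}(W)$ implies $\RP^{[d-1]}(X)=(\rho\times\rho)^{-1}\RP^{[d-1]}(W)$, i.e. $\rho$ is, for the relation $\RP^{[d-1]}$, a ``relation-preserving'' extension in the strong sense that $R_\rho\subset\RP^{[d-1]}(X)$ would be needed — but here we want the reverse. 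The clean route: from the hypothesis and the $\Q^{[d]}$-characterization of $\RP^{[d-1]}$, deduce $R_\rho\subset\RP^{[d-1]}(X)$; then Theorem \ref{th3}(2) says $W$ is a system of order $d-1$, done.

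To get $R_\rho\subset\RP^{[d-1]}(X)$: take $(x,x')\in R_\rho$, so $\rho(x)=\rho(x')=:w$. Then $(x,x',x',\ldots,x')$ projects under $\rho^{[d]}$ to $(w,w,\ldots,w)=w^{[d]}\in\Delta_W\subset\Q^{[d]}(W)$; by the hypothesis $\Q^{[d]}(X)=(\rho^{[d]})^{-1}\Q^{[d]}(W)$ we get $(x,x',x',\ldots,x')\in\Q^{[d]}(X)$, whence $(x,x')\in\RP^{[d-1]}(X)$ by Theorem \ref{thm-1}(1). Thus $R_\rho\subset\RP^{[d-1]}(X)$, and Theorem \ref{th3}(2) gives that $W$ is a system of order $d-1$, i.e. a factor of $Z_{d-1}$. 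Since by Corollary \ref{cube-distal} $Z_{d-1}$ does satisfy the saturation identity, $Z_{d-1}$ is precisely the minimal factor for which it holds.

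The main obstacle, and the only place requiring care, is making sure the quantifiers line up: the hypothesis only asserts the $\Q^{[d]}$-identity for the specific factor map $\rho$, and I must be careful to apply Theorem \ref{thm-1}(1) in both $X$ and $W$ (for the same integer $d$, giving $\RP^{[d-1]}$), and to apply Theorem \ref{th3}(2) with the correct direction of containment ($R_\rho\subset\RP^{[d-1]}(X)$ is exactly the hypothesis of that part). One should also note that the statement is implicitly about distal $X$ (so that Corollary \ref{cube-distal} applies to give the ``achievability'' half), but the ``maximality'' half above actually works for any minimal $X$, using only Theorems \ref{thm-1} and \ref{th3}; I would phrase the proof to make that clear.
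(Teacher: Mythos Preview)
Your core calculation is exactly the paper's: from $(x,x')\in R_\rho$ you form $(x,x',\ldots,x')$, observe it projects to $\Delta_W\subset\Q^{[d]}(W)$, pull back via the hypothesis to $\Q^{[d]}(X)$, and apply Theorem~\ref{thm-1}(1) to get $(x,x')\in\RP^{[d-1]}(X)$. That is precisely what the paper does, and it is the entire content of the argument.

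However, your final step and your framing of the target are garbled in a way that matters. Theorem~\ref{th3}(2) says $W$ is a system of order $d-1$ iff $\RP^{[d-1]}(X)\subset R_\rho$; you have proved the \emph{reverse} inclusion $R_\rho\subset\RP^{[d-1]}(X)$, so you cannot invoke \ref{th3}(2), and in fact the conclusion ``$W$ is a system of order $d-1$'' (equivalently $\RP^{[d-1]}(W)=\Delta_W$) is false in general---take $\rho=\id$, $W=X$. The correct deduction from $R_\rho\subset\RP^{[d-1]}(X)=R_{\pi}$ is purely order-theoretic: it says the fibres of $\rho$ are contained in the fibres of $\pi$, so $\pi$ factors through $\rho$, i.e.\ there is a map $W\to Z_{d-1}$ and $W$ sits between $X$ and $Z_{d-1}$. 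That is the sense in which $Z_{d-1}$ is the \emph{minimal} factor with the saturation property (smallest in the factor ordering), and it is exactly where the paper's proof stops. Your parenthetical ``any working factor is intermediate between $X$ and $Z_{d-1}$'' is the right picture; the sentences asserting ``$W$ is a factor of $Z_{d-1}$'' and ``$\RP^{[d-1]}(W)=\Delta_W$'' point the arrow the wrong way and should be deleted.
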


\begin{proof}
Assume $(Y,T)$ is a factor such that $\Q^{[d]}(X)=(\pi^{[d]})^{-1}\Q^{[d]}(Y)$ holds,
 we only need to show $R_\pi\subset \RP^{[d-1]}(X)$.
 To see this, assume $(x_1,x_2)\in R_\pi,$
 then $(\pi^{[d]})(x_1,x_2,\ldots,x_2)\in \Delta_Y\subset\Q^{[d]}(Y),$
 so $(x_1,x_2,\ldots,x_2)\in(\pi^{[d]})^{-1}\Q^{[d]}(Y)=\Q^{[d]}(X)$ and $ (x_1,x_2)\in\RP^{[d-1]}(X)$ by Theorem \ref{thm-1}.
\end{proof}
\medskip

Generally, we have that modulo proximal extensions $\Q^{[d]}(X)$ is $\pi^{[d]}$-saturated.

\begin{thm}\label{thm-main1}
Let $(X,T)$ be a minimal system and $d\in \N$. Let $\pi:(X,T)\rightarrow (Z_{d-1},T)$ be the factor map to the maximal $(d-1)$-step pro-nilfactor. Then there is a commutative diagram of homomorphisms of minimal flows
\begin{equation*}
  \xymatrix{
    X\ar[d]_{\pi}& X^\prime\ar[d]^{\pi^\prime}\ar[l]_{\theta^\prime} \\
    Z_{d-1}  &Y^\prime\ar[l]_{\theta}
    }
\end{equation*}
such that $\Q^{[d]}(X^\prime)=(\pi^{\prime[d]})^{-1}\Q^{[d]}(Y^\prime),$
where $\theta,\theta^\prime$ are proximal extensions.
\end{thm}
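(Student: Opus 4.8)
The plan is to combine the RIC-lifting machinery (Theorem~\ref{RIC}) with the already-proved Proposition~\ref{prop-RIC}. First I would apply Theorem~\ref{RIC} to the factor map $\pi:(X,T)\to (Z_{d-1},T)$. This produces a commutative diagram with proximal extensions $\theta':X'\to X$ and $\theta:Y'\to Z_{d-1}$ and a RIC extension $\pi':X'\to Y'$, where $X'$ is the unique minimal set in $R_{\pi\theta}=\{(x,y)\in X\times Y':\pi(x)=\theta(y)\}$. The issue is that Proposition~\ref{prop-RIC} requires the \emph{codomain} of the RIC map to be the maximal $(d-1)$-step pro-nilfactor of the \emph{domain}; a priori $Y'$ is only known to be a factor of $Z_{d-1}$, so I must identify $Y'$ with $Z_{d-1}(X')$, the maximal $(d-1)$-step pro-nilfactor of $X'$.

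For this identification I would argue as follows. Since $\theta'$ is a proximal extension of minimal systems, Theorem~\ref{th3} (or rather its consequence that $\RP^{[d-1]}$ is preserved by such extensions: $\theta'\times\theta'(\RP^{[d-1]}(X'))=\RP^{[d-1]}(X)$, and moreover a proximal extension induces an isomorphism on maximal pro-nilfactors) gives that the maximal $(d-1)$-step pro-nilfactor of $X'$ is (isomorphic to) $Z_{d-1}$ via $\theta$-composition. More directly: $Z_{d-1}(X')=X'/\RP^{[d-1]}(X')$, and because $\theta'$ is proximal one has $R_{\theta'}\subset \mathbf{P}(X',T)\subset \RP^{[d-1]}(X')$, so the factor map $X'\xrightarrow{\theta'}X\xrightarrow{\pi}Z_{d-1}$ factors through $Z_{d-1}(X')$; conversely $\pi\circ\theta'$ has codomain a system of order $d-1$, so $\RP^{[d-1]}(X')\subset R_{\pi\theta'}$ by Theorem~\ref{th3}(2). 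Hence $Z_{d-1}(X')=Z_{d-1}$ as a factor of $X'$ through $\pi\circ\theta'$. Now I check that this factor map equals $\theta\circ\pi'$: by construction of the EGS-diagram, $\pi\circ\theta'=\theta\circ\pi'$. Since $\pi':X'\to Y'$ is RIC (hence in particular has the property $R_{\pi'}=\RP^{[d-1]}(X')$ would need $Y'=Z_{d-1}(X')$) — here I must show $Y'$ itself \emph{is} $Z_{d-1}(X')$ and not a strictly smaller factor. But $\theta:Y'\to Z_{d-1}$ is a proximal extension of minimal systems and $Z_{d-1}$ is a system of order $d-1$; a proximal extension of a pro-nilsystem of order $d-1$ is again of order $d-1$ (since $\RP^{[d-1]}(Y')$ maps onto $\RP^{[d-1]}(Z_{d-1})=\Delta$ and the extension is proximal, forcing $\RP^{[d-1]}(Y')=\Delta$). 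Therefore $Y'$ is itself a $(d-1)$-step pro-nilfactor of $X'$, and since it receives $Z_{d-1}(X')=Z_{d-1}$ as a factor while also being a factor of... wait, actually $Y'$ is a factor of $X'$ lying over $Z_{d-1}$, is of order $d-1$, hence is a factor of the \emph{maximal} one $Z_{d-1}(X')=Z_{d-1}$, and maps onto $Z_{d-1}$; being squeezed between $Z_{d-1}$ and $Z_{d-1}$ forces $Y'\cong Z_{d-1}(X')$. Thus $\pi':X'\to Y'$ is precisely the factor map of $X'$ onto its maximal $(d-1)$-step pro-nilfactor.

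With the identification in hand, the conclusion is immediate: $\pi'$ is a RIC extension onto the maximal $(d-1)$-step pro-nilfactor of $X'$, so Proposition~\ref{prop-RIC} (applied to $X'$ in place of $X$) yields $\Q^{[d]}(X')=(\pi'^{[d]})^{-1}\Q^{[d]}(Y')$, which is exactly the assertion of Theorem~\ref{thm-main1}. The commutative diagram with proximal extensions $\theta,\theta'$ is the one produced by Theorem~\ref{RIC}.

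I expect the main obstacle to be the bookkeeping in the previous paragraph, namely verifying cleanly that in the EGS-diagram the system $Y'$ coincides with the maximal $(d-1)$-step pro-nilfactor of $X'$ — i.e. that lifting $\pi$ to a RIC extension and pulling back $Z_{d-1}$ along a proximal extension does not accidentally produce a proper factor of the true pro-nilfactor. The key facts needed are: (i) proximal extensions preserve $\RP^{[d-1]}$ and hence the maximal $(d-1)$-step pro-nilfactor (from Theorem~\ref{th3}, since $R_{\theta'}\subset\mathbf{P}\subset\RP^{[d-1]}$); (ii) a proximal extension of a system of order $d-1$ is again of order $d-1$; and (iii) the ``sandwich'' uniqueness of maximal pro-nilfactors. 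Everything else — the application of Theorem~\ref{RIC} and then Proposition~\ref{prop-RIC} — is formal.
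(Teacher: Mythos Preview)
Your overall strategy --- apply the RIC-diagram (Theorem~\ref{RIC}) and then invoke Proposition~\ref{prop-RIC} --- is exactly the paper's. However, your execution contains a genuine error: claim (ii), that a proximal extension of a system of order $d-1$ is again of order $d-1$, is \emph{false}. The paper's own Sturmian counterexample is an instance: the Sturmian system is an almost one-to-one (hence proximal) extension of $(\T,R_\alpha)$, which is of order $1$, yet the Sturmian system is not equicontinuous, since the nontrivial asymptotic pairs in $R_\pi$ lie in $\RP^{[1]}$. Consequently your ``sandwich'' identification $Y'=Z_{d-1}(X')$ cannot be completed; in the EGS-diagram $Y'$ may well be a strict proximal extension of $Z_{d-1}$ and need not be a pro-nilsystem at all.

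The paper sidesteps this completely. The key observation (the remark just before Corollary~\ref{cube-distal}) is that the proofs of Lemma~\ref{lem1}, Corollary~\ref{cor-s1} and Proposition~\ref{prop-RIC} only use the inclusion $R_\pi\subset\RP^{[d-1]}(X)$, not that the codomain is the \emph{maximal} pro-nilfactor. So it suffices to show $R_{\pi'}\subset\RP^{[d-1]}(X')$. The paper does this directly: given $(x_1',x_2')\in R_{\pi'}$, commutativity gives $(\theta'(x_1'),\theta'(x_2'))\in R_\pi=\RP^{[d-1]}(X)$; Theorem~\ref{th3}(1) lifts this to some $(x_1'',x_2'')\in\RP^{[d-1]}(X')$ with $\theta'(x_i'')=\theta'(x_i')$; proximality of $\theta'$ gives $(x_i',x_i'')\in\mathbf{P}(X')\subset\RP^{[d-1]}(X')$; and the equivalence-relation property finishes. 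In fact your own argument, once the faulty step is removed, already reaches what is needed by a slightly different route: your computation that $Z_{d-1}(X')\cong Z_{d-1}$ amounts to $\RP^{[d-1]}(X')=R_{\pi\theta'}=R_{\theta\pi'}\supset R_{\pi'}$, and that inclusion is all Proposition~\ref{prop-RIC} actually requires. You should drop the attempt to identify $Y'$ and instead appeal to this weaker hypothesis.
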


\begin{proof}
We only need to prove $R_{\pi^\prime}\subset \RP^{[d-1]}(X^\prime)$.
Let $(x_1^\prime,x_2^\prime)\in R_{\pi^\prime}$, then $\pi^\prime(x_1^\prime)=\pi^\prime(x_2^\prime)$ and
$\theta\pi^\prime(x_1^\prime)=\theta\pi^\prime(x_2^\prime)$, since the diagram is commutative, we have $\pi\theta^\prime(x_1^\prime)=\pi\theta^\prime(x_2^\prime)$, $(\theta^\prime(x_1^\prime),\theta^\prime(x_2^\prime))\in R_\pi=\RP^{[d-1]}(X).$

By theorem \ref{th3}, there is $(x_1^{\prime\prime},x_2^{\prime\prime})\in \RP^{[d-1]}(X^\prime),
(\theta^\prime (x_1^{\prime\prime}),\theta^\prime(x_2^{\prime\prime}))=(\theta^\prime(x_1^\prime),\theta^\prime(x_2^\prime))$.
Since $\theta^\prime$ is proximal, $(x_1^\prime,x_1^{\prime\prime}),(x_2^\prime,x_2^{\prime\prime})\in R_{\theta^\prime}\subset P(X^\prime)\subset \RP^{[d-1]}(X^\prime)$, since $\RP^{[d-1]}(X^\prime)$ is an equivalence relation, we have $(x_1^\prime,x_2^\prime)\in \RP^{[d-1]}(X^\prime)$, so $R_{\pi^\prime}\subset \RP^{[d-1]}(X^\prime).$

The result follows from Proposition \ref{prop-RIC}.
\end{proof}

\subsection{A counterexample}\
\medskip

Let $\pi:(X,T)\rightarrow (Z_{d-1},T)$ be the factor map. We use the following classical system to show that without additional conditions, $\Q^{[d]}(X)$ may not be $\pi^{[d]}$-saturated.

\begin{exam} Sturmian system.
\end{exam}
Let $\a$ be an irrational number in the interval $(0,1)$ and
$R_\a$ be the irrational rotation on the (complex) unit circle $\T$ generated
by $e^{2\pi i\a}$. Set
$$A_0=\left\{e^{2\pi i\theta}:0\leq \theta < (1-\a) \right\} \text{ and }
A_1=\left\{e^{2\pi i\theta}: (1-\a) \leq \theta < 1\right\}.$$

Consider $z \in \T$ and define $x \in \{0,1\}^{\Z}$ by: for all
$n\in \Z$, $x_{n}=i$ if and only if $R_{\a}^n(z) \in A_{i}$. Let  $X
\subset \{0,1\}^{\Z}$ be the orbit closure of $x$ under the shift
map $\sigma$ on $\{0,1\}^{\Z}$, i.e. for any $y \in \{0,1\}^{\Z}$,
$(\sigma(y))_n=y_{n+1}$. This system is called Sturmian system. It
is well known that $(X,\sigma)$ is a minimal almost one-to-one
extension of $(\T,R_\a)$. Moreover, it is an asymptotic extension.

Let $\pi: X \to \T$ be the former extension and consider
$(x_1,x_2)\in R_{\pi} \setminus \Delta_X$. Then $(x_1,x_2)$ is an
asymptotic pair and thus $(x_1,x_2)\in \RP^{[d]}$ for any integer
$d\geq 1$. It is showed in \cite[Example 4.8]{D-Y} that
$\{x_1,x_2\}^d\not \subset \Q^{[d]}(X).$ Hence
$$\Q^{[d]}(X)\neq (\pi^{[d]})^{-1}(\pi^{[d]}(\Q^{[d]}(X))).$$
That is, $\Q^{[d]}(X)$ is not $\pi^{[d]}$-saturated.

\subsection{Topological characteristic factors along cubes}\

\medskip

In this subsection we will use results developed above to show that up to proximal extensions the maximal $(d-1)$-step pro-nilfactor is the topological cubic
characteristic factor of order $d$. Before that, we use a different method to deal with distal systems.

\medskip

First we need some lemmas. By the proof of \cite[Theorem 3.1.]{SY} one can show the following lemma, which one can find another proof in \cite{G14}.

\begin{lem}\label{lem-sy-face}
Let $(X,T)$ be a system and $d \in\N$. If $\mathbf{x}\in X^{[d]}$ is an $\id \times T^{[d]}_*$-minimal point, then $\mathbf{x}$ is a $\mathcal{F}^{[d]}$-minimal point.
\end{lem}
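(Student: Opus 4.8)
The plan is to reproduce the enveloping‑semigroup argument behind \cite[Theorem 3.1]{SY}.

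\medskip\noindent\textbf{Reduction.} Every transformation in $\F^{[d]}$ and every power of $\id\times T^{[d]}_*$ fixes the $\mathbf 0$‑coordinate, so $\overline{\O(\mathbf x,\id\times T^{[d]}_*)}=\{x_{\mathbf 0}\}\times\overline{\O(\mathbf x_*,T^{[d]}_*)}$ and $\overline{\F^{[d]}}(\mathbf x)=\{x_{\mathbf 0}\}\times\overline{\F^{[d]}_*}(\mathbf x_*)$. Hence it suffices to show: if $\mathbf x_*\in X^{[d]}_*$ is minimal for the single transformation $T^{[d]}_*$ (the diagonal action of $T$ on the $2^d-1$ nonzero coordinates), then $\mathbf x_*$ is $\F^{[d]}_*$‑minimal. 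Note each coordinate of $\mathbf x_*$ is then a minimal point of $(X,T)$.

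\medskip\noindent\textbf{A simultaneous induction on $d$.} I would prove the displayed statement together with the companion: \emph{a $T^{[d]}$‑minimal point of $X^{[d]}$ is $\G^{[d]}$‑minimal}. The case $d=1$ is immediate, since $\F^{[1]}_*=\langle T\rangle$ and both statements are then tautological. For the inductive step use the canonical identifications $X^{[d]}_*\cong X^{[d-1]}_*\times X^{[d-1]}$ and $X^{[d]}\cong X^{[d-1]}\times X^{[d-1]}$ obtained by splitting coordinates according to their last digit. Under these, $\F^{[d]}_*$ is generated by $\{R_*\times R:R\in\F^{[d-1]}\}$ (where $R_*$ is the transformation of $X^{[d-1]}_*$ induced by $R$) together with $\id\times T^{[d-1]}$, while $\G^{[d]}$ is generated by $\{R\times R:R\in\F^{[d-1]}\}$, $T^{[d-1]}\times\id$ and $\id\times T^{[d-1]}$. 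In particular each of these groups is abelian, contains $\{\id\}\times\langle T^{[d-1]}\rangle$ as a central subgroup, surjects onto $\G^{[d-1]}$ under the second‑coordinate projection, and $\F^{[d]}_*$ (resp. $\G^{[d]}$) surjects onto $\F^{[d-1]}_*$ (resp. $\G^{[d-1]}$) under the first. Now if $(\mathbf y_*,\mathbf y'')$ is $T^{[d]}_*$‑minimal (resp. $(\mathbf x',\mathbf x'')$ is $T^{[d]}$‑minimal), then its first coordinate is $T^{[d-1]}_*$‑ resp. $T^{[d-1]}$‑minimal and its second coordinate is $T^{[d-1]}$‑minimal, so by the two inductive hypotheses the first coordinate is $\F^{[d-1]}_*$‑minimal (resp. $\G^{[d-1]}$‑minimal) and the second is $\G^{[d-1]}$‑minimal.

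\medskip\noindent\textbf{Assembling.} It then remains to see that a subsystem of a product of two minimal systems which projects onto each factor, and whose acting group has the structure above, is itself minimal. This is where one passes to the enveloping semigroup $E$ of the acting group: starting from an arbitrary point of the orbit closure, pick a minimal idempotent $u$ of $E(X^{[d-1]}_*,\F^{[d-1]}_*)$ (resp. of $E(X^{[d-1]},\G^{[d-1]})$) carrying its first coordinate back to the first coordinate of the base point, lift $u$ to $E$, and use that $\{\id\}\times\langle T^{[d-1]}\rangle$ is central and acts minimally on the $T^{[d-1]}$‑orbit closure of the second coordinate to arrange that the lift is a minimal idempotent of $E$ fixing the whole base point. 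This is exactly the computation carried out in \cite[Theorem 3.1]{SY}, and the same scheme delivers the companion $\G^{[d]}$‑statement; an alternative route appears in \cite{G14}.

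\medskip\noindent\textbf{Main obstacle.} The crux — and the reason for the idempotent machinery rather than a bare syndeticity count — is that the diagonal transformation $T^{[d-1]}$ is \emph{not} an element of $\F^{[d-1]}$, so one cannot move the second coordinate freely inside $\{R_*\times R\}$; and although every point $R\mathbf y''$ with $R\in\F^{[d-1]}$ is $T^{[d-1]}$‑minimal (because $R$ commutes with $T^{[d-1]}$, conjugating that $\Z$‑system to itself), a \emph{limit} of such points need not be. Controlling the second coordinate while dragging the first one back to the base point therefore genuinely requires choosing the returning element inside a minimal left ideal of $E$, which is precisely the technical heart of the SY argument.
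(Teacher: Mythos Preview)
Your proposal is correct and coincides with the paper's approach: the paper gives no self-contained proof of this lemma but simply refers to the enveloping-semigroup argument of \cite[Theorem 3.1]{SY} (and to \cite{G14} for an alternative), and your outline reproduces precisely that inductive scheme with the idempotent lifting step. The reduction to $X^{[d]}_*$, the decomposition $X^{[d]}_*\cong X^{[d-1]}_*\times X^{[d-1]}$, the description of the generators of $\F^{[d]}_*$ and $\G^{[d]}$, and the identification of the idempotent/central-subgroup computation as the crux are all accurate and match the SY proof to which the paper defers.
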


We set $Q^{[d]}[x]=\{\mathbf{z}\in Q^{[d]}(X):z_{\bf 0}=x\}$.

\begin{lem}\cite[Proposition 5.2.]{SY}\label{pro1}
Let $(X,T)$ be a minimal system and $d\in\N.$ If $\mathbf{x}\in Q^{[d]}[x],$ then $x^{[d]}\in\overline{\mathcal{F}^{[d]}}(\mathbf{x})$. Especially, $(\overline{\F^{[d]}}(x^{[d]}), \F^{[d]})$ is the unique $\F^{[d]}$-minimal subset in $\Q^{[d]}[x]$.
\end{lem}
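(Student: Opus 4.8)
The plan is to reduce the ``especially'' clause to the first assertion, then to reduce the first assertion to the case of $\F^{[d]}$-minimal points, and finally to prove that case by a minimal-idempotent argument in the enveloping semigroup of $(\Q^{[d]}(X),\G^{[d]})$, using the two minimality statements quoted above from \cite{SY} and (for the base of an induction on $d$) the recursive structure of $\Q^{[d]}$.

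\emph{Reductions.} Every $S\in\F^{[d]}$ is the identity on the coordinate $\mathbf 0$, and $\Q^{[d]}(X)$ is $\G^{[d]}$-invariant; hence $\Q^{[d]}[x]$ is a nonempty compact $\F^{[d]}$-invariant set containing $x^{[d]}$, and $\overline{\F^{[d]}}(\mathbf x)\subseteq\Q^{[d]}[x]$ for every $\mathbf x\in\Q^{[d]}[x]$. Suppose it is known that $x^{[d]}\in\overline{\F^{[d]}}(\mathbf x)$ for all $\mathbf x\in\Q^{[d]}[x]$. If $M\subseteq\Q^{[d]}[x]$ is $\F^{[d]}$-minimal and $\mathbf m\in M$, then $x^{[d]}\in\overline{\F^{[d]}}(\mathbf m)=M$, so $\overline{\F^{[d]}}(x^{[d]})\subseteq M$ and hence $M=\overline{\F^{[d]}}(x^{[d]})$; this is the ``especially'' clause. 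Since $\overline{\F^{[d]}}(\mathbf x)$ is a closed $\F^{[d]}$-invariant subset of $\Q^{[d]}[x]$, it contains an $\F^{[d]}$-minimal set, and replacing $\mathbf x$ by a point of it we are reduced to proving: \emph{if $\mathbf m\in\Q^{[d]}[x]$ is an $\F^{[d]}$-minimal point, then $x^{[d]}\in\overline{\F^{[d]}}(\mathbf m)$.}

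\emph{The idempotent step.} We may assume $X$ is infinite, so $\F^{[d]}\cap\langle T^{[d]}\rangle=\{\id^{[d]}\}$; as each face transformation and $T^{[d]}$ is built coordinatewise from powers of $T$, the group $\G^{[d]}$ is abelian and $\G^{[d]}=\F^{[d]}\cdot\langle T^{[d]}\rangle$. By \cite{SY} the system $(\Q^{[d]}(X),\G^{[d]})$ is minimal, so $\mathbf m$ is almost periodic and there are a minimal left ideal $L$ of $E:=E(\Q^{[d]}(X),\G^{[d]})$ and a minimal idempotent $u\in L$ with $u\mathbf m=\mathbf m$. By minimality again, $x^{[d]}\in\overline{\G^{[d]}\mathbf m}=E\mathbf m$, say $x^{[d]}=G\mathbf m$ with $G=\lim_\alpha g_\alpha$, $g_\alpha=(T^{[d]})^{n_\alpha}s_\alpha\in\G^{[d]}$, $s_\alpha\in\F^{[d]}$; inspecting coordinate $\mathbf 0$ forces $T^{n_\alpha}x\to x$. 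Since $\G^{[d]}$ is abelian, every element of $E$ commutes with every element of $\G^{[d]}$; combining this with the continuity of left multiplication in $E$ and of evaluation at $\mathbf m$, one obtains $uG=\lim_\alpha g_\alpha u$ and hence $ux^{[d]}=(uG)\mathbf m=\lim_\alpha g_\alpha(u\mathbf m)=\lim_\alpha g_\alpha\mathbf m=x^{[d]}$. Thus a single minimal idempotent $u$ fixes both $\mathbf m$ and $x^{[d]}$, and (since $L\mathbf m=\Q^{[d]}(X)$) they lie in a common $uL$-orbit; what remains is to show that $x^{[d]}$ is reached from $\mathbf m$ using $\F^{[d]}$ alone, i.e.\ $x^{[d]}\in\overline{\F^{[d]}\mathbf m}$.

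\emph{The main obstacle.} This last step is the heart of the matter, and the difficulty is that the multiplication of $E$ is only separately continuous, so one cannot push the limit in $\alpha$ past the diagonal factors $(T^{[d]})^{n_\alpha}$. I would resolve it by induction on $d$ via the splitting $\mathbf m=(\mathbf m_0,\mathbf m_1)\in\Q^{[d-1]}(X)\times\Q^{[d-1]}(X)$, under which $\F^{[d]}$ acts by $(\mathbf y_0,\mathbf y_1)\mapsto(S'\mathbf y_0,\,S'(T^{[d-1]})^k\mathbf y_1)$ with $S'\in\F^{[d-1]}$ and $k\in\Z$. The inductive hypothesis applies to $\mathbf m_0\in\Q^{[d-1]}[x]$ and, in a second stage, to the $(d-1)$-dimensional points that appear in the second coordinate; since $\overline{\F^{[d-1]}}(x^{[d-1]})$ is minimal (\cite{SY}), the orbit closures of these points are under control, and Lemma \ref{lem-sy-face} and Theorem \ref{thm-1} serve to identify the relevant intermediate points with points of $\overline{\F^{[d]}}(x^{[d]})$. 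The idempotent $u$ is carried through all of the limiting steps, so that every intermediate point stays in the single minimal left ideal $L$; this alignment forces the final pair of coordinates to be proximal and thereby lets the diagonal $x^{[d]}$ be reached. The base case $d=1$ is immediate, since $\Q^{[1]}(X)[x]=\{x\}\times X=\overline{\F^{[1]}}(x^{[1]})$. (One could also bypass the induction and instead exploit the Euclidean-permutation symmetries of $\Q^{[d]}$, in the spirit of the Claim in the proof of Lemma \ref{lem1}, to reduce to hitting the diagonal one coordinate at a time.)
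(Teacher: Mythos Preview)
First, note that the paper does not prove this lemma at all: it is quoted verbatim from \cite[Proposition~5.2]{SY} and used as a black box, so there is no ``paper's own proof'' to compare against. What follows is an assessment of your argument on its own terms.

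Your reductions are fine, and the inductive scheme you outline in the final paragraph (split $\mathbf m=(\mathbf m_0,\mathbf m_1)$, apply the $d{-}1$ case to $\mathbf m_0$, then deal with the second half) is indeed the skeleton of the actual proof in \cite{SY}. However, two genuine problems remain. First, the ``idempotent step'' contains an error: from $g_\alpha u=u g_\alpha$ and the continuity of \emph{right} multiplication in $E$ you only obtain $Gu=\lim_\alpha u g_\alpha$, not $uG=\lim_\alpha g_\alpha u$ as you write; left multiplication by $u$ is not continuous on $E$, so you cannot conclude $uG=Gu$ or $u x^{[d]}=x^{[d]}$ this way. (In fact, in a minimal abelian system a fixed minimal idempotent need not fix every point.) Even granting $u x^{[d]}=x^{[d]}$, you correctly observe that this does not yet give $x^{[d]}\in\overline{\F^{[d]}}(\mathbf m)$, so the idempotent paragraph does no real work toward the conclusion.

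Second, the ``main obstacle'' paragraph is a plan, not a proof: none of the limiting steps are carried out, and the appeal to Theorem~\ref{thm-1} is circular, since in \cite{SY} the characterisation $(x,y)\in\RP^{[d]}\Leftrightarrow(x,y,\ldots,y)\in\overline{\F^{[d+1]}}(x^{[d+1]})$ is proved \emph{using} Proposition~5.2. To make the induction go through you must argue directly: after using the $(d{-}1)$-case to bring $\mathbf m_0$ to $x^{[d-1]}$, you are left with a point of the form $(x^{[d-1]},\mathbf w)$ with $\mathbf w\in\Q^{[d-1]}(X)$, and you must show that the extra generator $T^{[d]}_d=\id^{[d-1]}\times T^{[d-1]}$ together with $\F^{[d-1]}\times\F^{[d-1]}$ suffices to drive $\mathbf w$ to $x^{[d-1]}$ while keeping the first half at $x^{[d-1]}$. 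This step requires a careful argument about minimal points and the enveloping semigroup (in \cite{SY} it occupies several pages and uses an auxiliary result on proximality along faces); your sketch does not supply it.
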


\begin{lem}\label{th4}
Let $(X,T)$ be a minimal system and $d\in \N$. Let $\pi:(X,T)\rightarrow (Z_{d-1},T)$ be the factor map to the maximal $(d-1)$-step pro-nilfactor. Assume that ${\bf x}\in R_\pi^{2^d}$ is an $\id\times T^{[d]}_*$-minimal point. Then  ${\bf x}\in \overline{\mathcal{F}^{[d]}}(x_{\bf 0}^{[d]})$.
\end{lem}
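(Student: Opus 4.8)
\textbf{Proof proposal for Lemma \ref{th4}.}

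The plan is to combine the saturation result Proposition \ref{prop-RIC} (or rather the fact underlying it) with Lemma \ref{lem-sy-face} and Lemma \ref{pro1}. First I would observe that since $\mathbf{x}\in R_\pi^{2^d}$ is an $\id\times T^{[d]}_*$-minimal point, Lemma \ref{lem-sy-face} tells us that $\mathbf{x}$ is in fact an $\mathcal{F}^{[d]}$-minimal point; in particular $(\overline{\mathcal{F}^{[d]}}(\mathbf{x}),\mathcal{F}^{[d]})$ is a minimal system. Next I would show $\mathbf{x}\in\Q^{[d]}(X)$. This does not follow directly from Corollary \ref{cor-s1}, because $\mathbf{x}$ is only assumed $\id\times T^{[d]}_*$-minimal, not $T^{[d]}$-minimal; however it does follow from Lemma \ref{lem1}, whose hypothesis (2) only requires that $p_1:\overline{\O}(\mathbf{x},T^{[d]})\to X$ be semi-open. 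Since $X$ is minimal, the projection $p_1$ maps $\overline{\O}(\mathbf{x},T^{[d]})$ onto $X$ (the image is a nonempty closed invariant set, hence all of $X$), and an onto factor map between... here one must be a bit careful: $\overline{\O}(\mathbf{x},T^{[d]})$ need not be minimal. Instead I would argue that $p_1$ restricted to the $\mathcal{F}^{[d]}$-orbit closure, or use that semi-openness is automatic: any factor map from a system with a dense set of minimal points onto a minimal system is semi-open. Alternatively, and more cleanly, I would note that an $\id\times T^{[d]}_*$-minimal point has the property that its $T^{[d]}$-orbit closure projects onto a minimal set under $p_1$ and $p_1$ is semi-open because $(\id\times T^{[d]}_*)$-minimality forces enough recurrence; this is exactly the kind of computation done in Lemma \ref{lem1}'s setting, so I would simply verify hypothesis (2) and invoke Lemma \ref{lem1} to conclude $\mathbf{x}\in\Q^{[d]}(X)$, in fact $\mathbf{x}\in\Q^{[d]}[x_{\bf 0}]$.

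Once $\mathbf{x}\in\Q^{[d]}[x_{\bf 0}]$ is established, Lemma \ref{pro1} applies directly: it states that $(\overline{\mathcal{F}^{[d]}}(x_{\bf 0}^{[d]}),\mathcal{F}^{[d]})$ is the \emph{unique} $\mathcal{F}^{[d]}$-minimal subset of $\Q^{[d]}[x_{\bf 0}]$. But $\overline{\mathcal{F}^{[d]}}(\mathbf{x})$ is an $\mathcal{F}^{[d]}$-minimal subset of $X^{[d]}$ (by the first step), it is contained in $\Q^{[d]}(X)$ (since $\Q^{[d]}(X)$ is $\mathcal{F}^{[d]}$-invariant and closed and contains $\mathbf{x}$), and every point of it has first coordinate $x_{\bf 0}$ (since $\mathcal{F}^{[d]}$ acts trivially on the $\mathbf{0}$-coordinate). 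Hence $\overline{\mathcal{F}^{[d]}}(\mathbf{x})$ is an $\mathcal{F}^{[d]}$-minimal subset of $\Q^{[d]}[x_{\bf 0}]$, so by uniqueness $\overline{\mathcal{F}^{[d]}}(\mathbf{x})=\overline{\mathcal{F}^{[d]}}(x_{\bf 0}^{[d]})$. In particular $\mathbf{x}\in\overline{\mathcal{F}^{[d]}}(x_{\bf 0}^{[d]})$, which is the claim.

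The main obstacle I anticipate is the verification of hypothesis (2) of Lemma \ref{lem1}, i.e.\ that $p_1:\overline{\O}(\mathbf{x},T^{[d]})\to X$ is semi-open, starting only from $\id\times T^{[d]}_*$-minimality of $\mathbf{x}$. The point is that $T^{[d]}=T\times T^{[d]}_*$ acts on the first coordinate by $T$, which is nontrivial, so $\overline{\O}(\mathbf{x},T^{[d]})$ may be strictly larger than $\overline{\O}(\mathbf{x},\id\times T^{[d]}_*)$ and need not be minimal. One clean way around this: $\id\times T^{[d]}_*$-minimality of $\mathbf{x}$ together with Lemma \ref{lem-sy-face} gives that $\mathbf{x}$ is $\mathcal{F}^{[d]}$-minimal, and since $\mathcal{G}^{[d]}$ is generated by $\mathcal{F}^{[d]}$ and the diagonal $T^{[d]}$, the $T^{[d]}$-orbit closure of $\mathbf{x}$ already contains the $\mathcal{F}^{[d]}$-minimal set through $\mathbf{x}$; minimal points are then dense in $\overline{\O}(\mathbf{x},T^{[d]})$ (every point's $\mathcal{F}^{[d]}$-orbit closure, hence a minimal subset, lies in it), and a factor map from a system with dense minimal points onto the minimal system $X$ is automatically semi-open. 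With (2) in hand, Lemma \ref{lem1} and then Lemma \ref{pro1} finish the argument with no further difficulty.
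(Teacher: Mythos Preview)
Your overall strategy---show $\mathbf{x}\in\Q^{[d]}[x_{\bf 0}]$, observe $\mathbf{x}$ is $\F^{[d]}$-minimal via Lemma~\ref{lem-sy-face}, and then invoke the uniqueness part of Lemma~\ref{pro1}---is sound, and indeed this is exactly the skeleton of the paper's proof. The gap is in your attempt to reach $\mathbf{x}\in\Q^{[d]}(X)$ through Lemma~\ref{lem1}. Hypothesis~(2) there asks that $p_1:\overline{\O}(\mathbf{x},T^{[d]})\to X$ be semi-open, and your justification for this fails: the set $\overline{\O}(\mathbf{x},T^{[d]})$ is only $T^{[d]}$-invariant, not $\F^{[d]}$-invariant, so the sentence ``every point's $\F^{[d]}$-orbit closure \dots\ lies in it'' is false. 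Consequently you have no argument that $T^{[d]}$-minimal points are dense in $\overline{\O}(\mathbf{x},T^{[d]})$, and in fact they need not be. In the Sturmian system take a doubled fibre $\{a,b\}$ and $\mathbf{x}=(b,a,\dots,a)$: then $\mathbf{x}_*=(a,\dots,a)$ is $T^{[d]}_*$-minimal and $\mathbf{x}\in R_\pi^{2^d}$, but since $(a,b)$ is asymptotic the $T^{[d]}$-orbit closure of $\mathbf{x}$ is its (countable) orbit together with a diagonal-type minimal set, and a small product neighbourhood of $\mathbf{x}$ meets only the orbit; hence $p_1$ is not semi-open. So Lemma~\ref{lem1} genuinely does not apply.

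The paper circumvents this by \emph{not} trying to show $\mathbf{x}\in\Q^{[d]}(X)$ directly. Instead it proves a Claim: using repeated applications of Lemma~\ref{pro1} inside $\overline{\F^{[d]}}(\mathbf{x})$ one manufactures a point of the special shape $(x,x,\dots,x,x')\in\overline{\F^{[d]}}(\mathbf{x})$ with $(x,x')\in R_\pi=\RP^{[d-1]}(X)$. For \emph{that} point membership in $\Q^{[d]}[x]$ follows immediately from the characterisation of $\RP^{[d-1]}$ (Theorem~\ref{thm-1} / \cite[Lemma~6.2]{SY}), with no semi-openness needed. One then runs your uniqueness argument on $(x,\dots,x,x')$ rather than on $\mathbf{x}$ itself. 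The missing idea in your proposal is precisely this reduction step: producing, inside $\overline{\F^{[d]}}(\mathbf{x})$, a point whose membership in $\Q^{[d]}[x]$ can be read off from $\RP^{[d-1]}$.
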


\begin{proof}
Assume that ${\bf x}\in R_\pi^{2^d}$ is an $\id\times T^{[d]}_*$-minimal point.
Let $x=x_{\bf 0}$. We show that ${\bf x}\in \overline{\mathcal{F}^{[d]}}(x^{[d]})$.
First we have the following claim, whose proof is given later.

\medskip
\noindent {\bf Claim:} {There is some $x'\in X$ such that $(x,x')\in R_\pi$ and
$$(x^{[d]}_*,x')=(x,x,\ldots,x,x')\in \overline{\F^{[d]}}({\bf x}).$$}

Since $R_\pi=\RP^{[d-1]}(X)$, $(x^{[d]}_*,x')\in \Q^{[d]}[x]$ by \cite[Lemma 6.2]{SY}.
Since ${\bf x}\in R_\pi^{2^d}$ is an $\id\times T^{[d]}_*$-minimal point, by Lemma \ref{lem-sy-face}, ${\bf x}$ is a $\mathcal{F}^{[d]}$-minimal point and so does $(x^{[d]}_*,x')$. By Proposition \ref{pro1}$(\overline{\F^{[d]}}(x^{[d]}), \F^{[d]})$ is the unique $\F^{[d]}$-minimal subset in $\Q^{[d]}[x]$, and one has that  $(x^{[d]}_*,x')\in\overline{\mathcal{F}^{[d]}}(x^{[d]})$. Since ${\bf x}$ is a
$\mathcal{F}^{[d]}$-minimal point and $(x^{[d]}_*,x')\in \overline{\F^{[d]}}({\bf x})$, one has that ${\bf x}\in \overline{\mathcal{F}^{[d]}}(x^{[d]})$.

\medskip

Now we give the proof of Claim. The idea of the proof of Claim is from the proof of \cite[Theorem 6.4]{SY}. We show the case $d=3$, and general case is similar.
Let
\begin{equation*}
     {\bf x}=(x_{000},x_{100},x_{010},x_{110},x_{001},
    x_{101},x_{011},x_{111}).
\end{equation*}
Then $x=x_{000}$.

By Proposition \ref{pro1},
there is some sequence $F^1_k\in \F^{[2]}$ such that
$$F_k^1(x_{000},x_{100},x_{010},x_{110})\to x^{[2]}=(x,x,x,x) , \ k\to \infty .$$
We may assume that
$$ F_k^1(x_{001},x_{101},x_{011},x_{111})\to (x_{001},x_{101}',x'_{011},x'_{111}),\ k\to \infty .$$
Since $F_k^1\times F_k^1\in \F^{[3]}$, one has that
$$(x,x,x,x,x_{001},x'_{101},x'_{011},x'_{111})\in \overline {\F^{[3]}}({\bf x}).$$

By Proposition \ref{pro1},
there is some sequence $F^2_k\in \F^{[2]}$ such that
$$F_k^2(x,x,x_{001},x'_{101})\to x^{[2]}=(x,x,x,x) , \ k\to \infty .$$
We may assume that
$$ F_k^2(x,x,x'_{011},x'_{111})\to (x,x,x''_{011},x''_{111}),\ k\to \infty .$$
Let $F_k^2=(F_k^{21}, F^{22}_k)$, where $F_k^{21}$ and $F^{22}_k$ act on $X^2$.
Then $(F_k^{21}, F_k^{21}, F^{22}_k, F^{22}_k)\in \F^{[3]}$, one has that
$$(x,x,x,x,x,x,x''_{011},x''_{111})\in \overline {\F^{[3]}}({\bf x}).$$

Again by Proposition \ref{pro1},
there is some sequence $F^3_k\in \F^{[2]}$ such that
$$F_k^3(x,x,x,x''_{011})\to x^{[2]}=(x,x,x,x) , \ k\to \infty .$$
We may assume that
$$ F_k^3(x,x,x,x''_{111})\to (x,x,x,x'),\ k\to \infty .$$
Let $F_k^3=(f_k^{1}, f_k^2,f_k^3,f^4_k)$.
Then $(f_k^{1},f_k^{1}, f_k^2, f_k^2,f_k^3,f_k^3,f^4_k,f^4_k)\in \F^{[3]}$, one has that
$$(x,x,x,x,x,x,x,x')\in \overline {\F^{[3]}}({\bf x}).$$
It is easy to check that $(x,x')\in R_\pi$. The proof is complete.
\end{proof}

By Lemma \ref{th4} one can deal with distal systems.

\begin{cor}\label{thm-distal}
Let $(X,T)$ be a minimal distal system and $d\in \N$. Let $\pi:(X,T)\rightarrow (Z_{d-1},T)$ be the factor map to the maximal $(d-1)$-step pro-nilfactor. Then
for each $x\in X$ with $y=\pi(x)$ one has that
\begin{equation}\label{s1}
\overline{\F^{[d]}}(x^{[d]})=\{x\}\times (\pi^{[d]}_*)^{-1}\overline{\F^{[d]}_*}(y^{[d]}_*).
\end{equation}
In particular, $Z_{d-1}$ is the topological cubic characteristic factor of order $d$.
\end{cor}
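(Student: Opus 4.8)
The plan is to prove the two inclusions in (\ref{s1}) separately, and I will use distality of $(X,T)$ in exactly two ways, both essential. First, $Z_{d-1}$ is then distal and $\pi$ is a distal extension, hence RIC, hence open; consequently $\pi^{[d]}_*=\underbrace{\pi\times\cdots\times\pi}_{2^d-1}\colon X^{[d]}_*\to Z^{[d]}_*$ is open, being a finite product of open maps. Second, $(X^{2^d-1},T^{(2^d-1)})$ is distal, so every point of $X^{[d]}_*$ has a minimal orbit closure under $T^{[d]}_*$. Throughout write $y=\pi(x)$; since every $S\in\F^{[d]}$ fixes the first coordinate, I identify $S$ with the induced transformation $S_*\in\F^{[d]}_*$ of $X^{[d]}_*$.

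The inclusion ``$\subseteq$'' in (\ref{s1}) is immediate: a point of $\overline{\F^{[d]}}(x^{[d]})$ is a limit of points $Sx^{[d]}$ with $S\in\F^{[d]}$, its ${\bf 0}$-coordinate is $x$, and $\pi^{[d]}_*\circ p_*$ sends $Sx^{[d]}$ to $S_*y^{[d]}_*\in\overline{\F^{[d]}_*}(y^{[d]}_*)$, a closed set.

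The reverse inclusion is the heart, and I would split it into two steps. \textbf{Step 1.} For every ${\bf v}\in R_\pi^{2^d}$, the orbit closure of ${\bf v}$ under $\id\times T^{[d]}_*$ is $\{v_{\bf 0}\}\times\overline{\O({\bf v}_*,T^{[d]}_*)}$, which is minimal by the distality fact above; hence ${\bf v}$ is an $\id\times T^{[d]}_*$-minimal point, so Lemma~\ref{th4} gives ${\bf v}\in\overline{\F^{[d]}}(v_{\bf 0}^{[d]})$. Applying this to every ${\bf v}=(x,{\bf v}_*)$ with $\pi(v_\ep)=y$ for all $\ep\neq{\bf 0}$ yields $\{x\}\times(\pi^{[d]}_*)^{-1}(y^{[d]}_*)\subseteq\overline{\F^{[d]}}(x^{[d]})$. \textbf{Step 2.} Each $S_*\in\F^{[d]}_*$ is a homeomorphism intertwining $\pi^{[d]}_*$, whence $S_*\big((\pi^{[d]}_*)^{-1}(y^{[d]}_*)\big)=(\pi^{[d]}_*)^{-1}(S_*y^{[d]}_*)$; taking the union over $S\in\F^{[d]}$ shows the $\F^{[d]}$-orbit of $\{x\}\times(\pi^{[d]}_*)^{-1}(y^{[d]}_*)$ equals $\{x\}\times(\pi^{[d]}_*)^{-1}\big(\O(y^{[d]}_*,\F^{[d]}_*)\big)$. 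Since $\pi^{[d]}_*$ is open, $(\pi^{[d]}_*)^{-1}(\overline{B})=\overline{(\pi^{[d]}_*)^{-1}(B)}$ for every set $B$, so the closure of this orbit is exactly the right-hand side $\{x\}\times(\pi^{[d]}_*)^{-1}\overline{\F^{[d]}_*}(y^{[d]}_*)$ of (\ref{s1}). But by Step~1 this orbit lies in the closed $\F^{[d]}$-invariant set $\overline{\F^{[d]}}(x^{[d]})$, hence so does its closure. This proves ``$\supseteq$'', and (\ref{s1}) follows.

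For the final assertion I take $X_0=X$ (a dense $G_\d$ set): by (\ref{s1}), $F_x=p_*\big(\overline{\F^{[d]}}(x^{[d]})\big)=(\pi^{[d]}_*)^{-1}\overline{\F^{[d]}_*}(y^{[d]}_*)$, so $(\pi^{[d]}_*)^{-1}\big(\pi^{[d]}_*(F_x)\big)=F_x$, that is, $F_x$ is $\pi^{2^d-1}$-saturated for every $x$; by Definition~\ref{de-characristic-factor} this means $Z_{d-1}$ is the topological cubic characteristic factor of order $d$. I expect the main obstacle to be Step~2: the statement genuinely fails without distality (the Sturmian example above), so distality must enter essentially, and it does so precisely through the openness of $\pi$, which is what converts ``the preimage of the orbit closure $\overline{\F^{[d]}_*}(y^{[d]}_*)$'' into ``the closure of the preimage of the orbit $\O(y^{[d]}_*,\F^{[d]}_*)$''; the remainder of Step~2 is bookkeeping — keeping track of which sets are $\F^{[d]}$-invariant as opposed to merely $\id\times T^{[d]}_*$-invariant, and checking the equivariance of preimages under the $S_*$.
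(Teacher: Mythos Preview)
Your proof is correct and follows the paper's intended approach: the paper's entire proof of Corollary~\ref{thm-distal} is the single sentence ``By Lemma~\ref{th4} one can deal with distal systems,'' and your Step~1 is exactly that application (distality makes every point of $R_\pi^{2^d}$ an $\id\times T^{[d]}_*$-minimal point, so Lemma~\ref{th4} applies to every fiber point). Your Step~2, using openness of $\pi$ to pass from the single fiber $(\pi^{[d]}_*)^{-1}(y^{[d]}_*)$ to the full preimage of $\overline{\F^{[d]}_*}(y^{[d]}_*)$, is a genuine detail the paper suppresses; it is needed and your argument via $(\pi^{[d]}_*)^{-1}(\overline{B})=\overline{(\pi^{[d]}_*)^{-1}(B)}$ for open $\pi^{[d]}_*$ is the natural way to supply it.
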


\begin{rem}
In the definition of the topological cubic characteristic factor of order $d$, we require (\ref{s1}) holds for a dense $G_\d$ set. But for distal systems, Corollary \ref{thm-distal} shows (\ref{s1}) holds for all $x\in X$.
\end{rem}

\medskip

By the method in \cite{AG95} or \cite[Section 4.]{G94}, one can prove the following result. We omit the proof here, and please refer to \cite{AG95} and \cite{G94} for more details about the methods.

\begin{lem}\label{lem-AG}
Let $(X,T)$ be a minimal system and $d\in \N$. There exists a dense $G_\delta$ set $X_0\subset X$ such that for each $x\in X_0$ one has that
\begin{equation*}
\Q^{[d]}[x]=\overline{\F^{[d]}}(x^{[d]}).
\end{equation*}
\end{lem}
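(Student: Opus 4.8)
\textbf{Proof proposal for Lemma \ref{lem-AG}.}
The plan is to follow the Akin--Glasner / Glasner residuality method: one shows that the set
\[
X_0=\{x\in X: \Q^{[d]}[x]=\overline{\F^{[d]}}(x^{[d]})\}
\]
is a dense $G_\d$ subset of $X$ by exhibiting it as a countable intersection of open dense sets defined via a semicontinuous ``orbit closure'' map. First I would record the easy inclusion: since $x^{[d]}\in\Q^{[d]}(X)$ and $\Q^{[d]}(X)$ is $\F^{[d]}$-invariant and closed, one always has $\overline{\F^{[d]}}(x^{[d]})\subseteq\Q^{[d]}[x]$, so the content is the reverse inclusion for a residual set of $x$. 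The key structural input is that, by Lemma \ref{pro1}, $(\overline{\F^{[d]}}(x^{[d]}),\F^{[d]})$ is the \emph{unique} $\F^{[d]}$-minimal subset of $\Q^{[d]}[x]$; hence it suffices to find a residual set of $x$ for which $\Q^{[d]}[x]$ is itself $\F^{[d]}$-minimal, equivalently for which $\Q^{[d]}[x]$ contains no proper nonempty closed $\F^{[d]}$-invariant subset, equivalently for which every point of $\Q^{[d]}[x]$ has dense $\F^{[d]}$-orbit in $\Q^{[d]}[x]$.

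Next I would set up the semicontinuity framework. Consider the fiber map $x\mapsto \Q^{[d]}[x]$ as a map from $X$ into the hyperspace $2^{X^{[d]}}$ of closed subsets with the Hausdorff metric; since $\Q^{[d]}(X)$ is closed in $X^{[d]}$ and $p_1:\Q^{[d]}(X)\to X$ is a continuous surjection between compact metric spaces, this fiber map is upper semicontinuous, and therefore its points of continuity form a dense $G_\d$ set, say $X_1$. Similarly, for each $\mathbf z$ in a fixed countable dense subset of $\Q^{[d]}(X)$ one may consider the orbit-closure map and argue that $\overline{\F^{[d]}}(\mathbf z)$ varies nicely; the standard trick is: for a countable base $\{O_m\}$ of $X^{[d]}$, let $W_m=\{x\in X: $ every point of $\Q^{[d]}[x]$ whose $\F^{[d]}$-orbit meets $\Q^{[d]}[x]\cap\overline{O_m}$ has its orbit meeting $O_m\}$-type conditions, arranged so that $\bigcap_m W_m$ consists exactly of those $x$ with $\Q^{[d]}[x]$ $\F^{[d]}$-minimal. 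Each such $W_m$ is shown open by upper semicontinuity of $x\mapsto\Q^{[d]}[x]$ together with compactness, and dense by using minimality of $(X,T)$ to translate a good $x_0$ (which exists because \emph{some} fiber is minimal, e.g.\ using that $(\overline{\F^{[d]}}(x^{[d]}),\F^{[d]})$ is always minimal and comparing dimensions, or by a Baire argument inside $\Q^{[d]}(X)$ itself) along the $T$-orbit, noting $\Q^{[d]}[Tx]=T^{[d]}\Q^{[d]}[x]$ so the property is $T$-invariant and its witness set is therefore dense.

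The cleanest route, and the one I would actually write, is to run Baire category on $\Q^{[d]}(X)$ directly: since $(\Q^{[d]}(X),\G^{[d]})$ is minimal (quoted in the excerpt from \cite{SY}), for every open $U\subseteq X^{[d]}$ meeting $\Q^{[d]}(X)$ the set $\{\mathbf z\in\Q^{[d]}(X): \overline{\F^{[d]}}(\mathbf z)\cap U\neq\emptyset\}$ is open and $\G^{[d]}$-invariant hence dense; intersecting over a countable base gives a dense $G_\d$ set $\Q_0\subseteq\Q^{[d]}(X)$ of points with dense $\F^{[d]}$-orbit relative to... here one must be careful, because $\F^{[d]}$-orbits are \emph{not} dense in all of $\Q^{[d]}(X)$, only within a fiber. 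So instead I would fix the first coordinate: work fiberwise and use that $p_1$ is open on a residual set (this is exactly where continuity points of $x\mapsto\Q^{[d]}[x]$ enter). Concretely, let $X_1$ be the continuity points of $x\mapsto\Q^{[d]}[x]$; for $x\in X_1$ and $\mathbf z\in\Q^{[d]}[x]$ one can approximate $\mathbf z$ by points in nearby fibers lying on the $\G^{[d]}$-orbit of the diagonal, push the ``diagonal-orbit'' approximation through, and conclude $\overline{\F^{[d]}}(\mathbf z)\supseteq\overline{\F^{[d]}}(x^{[d]})\cap(\text{small nbhd})$, iterate to get $x^{[d]}\in\overline{\F^{[d]}}(\mathbf z)$, hence $\overline{\F^{[d]}}(x^{[d]})\subseteq\overline{\F^{[d]}}(\mathbf z)$, and since also $\mathbf z\in\Q^{[d]}[x]$ and by Lemma \ref{pro1} the orbit closure $\overline{\F^{[d]}}(x^{[d]})$ is the unique minimal set there, a short argument forces $\mathbf z\in\overline{\F^{[d]}}(x^{[d]})$; taking $\mathbf z$ arbitrary gives $\Q^{[d]}[x]\subseteq\overline{\F^{[d]}}(x^{[d]})$, so $X_0\supseteq X_1$ is residual.

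\textbf{Main obstacle.} The delicate point is the passage from ``$p_1$ behaves well at $x$'' to actually showing every $\mathbf z\in\Q^{[d]}[x]$ is an $\F^{[d]}$-limit of $x^{[d]}$: one is trying to move the first coordinate from $z_{\mathbf 0}=x$ back to $x$ while dragging $\mathbf z$ toward the diagonal using only face transformations, and the honest input for this is precisely Lemma \ref{pro1} (uniqueness of the minimal set in $\Q^{[d]}[x]$) combined with upper semicontinuity; getting the quantifiers right in the density-of-$W_m$ step — i.e.\ producing, near an arbitrary $x$, a point $x'$ whose fiber $\Q^{[d]}[x']$ is $\F^{[d]}$-minimal — is the crux, and this is exactly the content that is handled by the Akin--Glasner machinery in \cite{AG95} and by Glasner in \cite[Section 4]{G94}, which is why the statement is quoted here with the proof deferred to those references.
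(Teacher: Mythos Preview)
Your proposal is correct and aligns with the paper's treatment: the paper does not give a proof of this lemma at all but simply defers to the residuality method of Akin--Glasner \cite{AG95} and Glasner \cite[Section 4]{G94}, which is precisely the approach you outline. Your sketch in fact supplies more detail than the paper does, correctly isolating upper semicontinuity of the fiber map $x\mapsto\Q^{[d]}[x]$, $T$-invariance of the good set, and the role of Lemma \ref{pro1} as the structural inputs, and rightly flagging the density step as the place where the cited machinery does the work.
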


\begin{prop}\label{pro2}
Let $(X,T)$ be a minimal system and $d\in \N$. Let $\pi:(X,T)\rightarrow (Z_{d-1},T)$ be the factor map to the maximal $(d-1)$-step pro-nilfactor. If $\pi$ is RIC, then $Z_{d-1}$ is the topological cubic characteristic factor of order $d$. That is, there exists a dense $G_\delta$ set $X_0\subset X$ such that
for each $x\in X_0$ with $y=\pi(x)$ one has that
\begin{equation*}
\overline{\F^{[d]}}(x^{[d]})=\{x\}\times (\pi^{[d]}_*)^{-1}\overline{\F^{[d]}_*}(y^{[d]}_*).
\end{equation*}
\end{prop}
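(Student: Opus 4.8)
The plan is to deduce Proposition~\ref{pro2} from two facts that are already available: Proposition~\ref{prop-RIC}, which gives $\Q^{[d]}(X)=(\pi^{[d]})^{-1}\Q^{[d]}(Z_{d-1})$ because $\pi$ is RIC, and Lemma~\ref{lem-AG}, which provides a dense $G_\d$ set $X_0\subseteq X$ with $\Q^{[d]}[x]=\overline{\F^{[d]}}(x^{[d]})$ for every $x\in X_0$. Everything else is formal bookkeeping with the commuting face-group actions on $X^{[d]}$ and $Z_{d-1}^{[d]}$ and the projection $p_*\colon X^{[d]}\to X^{[d]}_*$. I would take $X_0$ to be exactly the set supplied by Lemma~\ref{lem-AG} and prove the two inclusions separately.

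First I would record the elementary inclusion, which holds for \emph{every} $x\in X$. Since every element of $\F^{[d]}$ fixes the coordinate ${\bf 0}$, each point of $\overline{\F^{[d]}}(x^{[d]})$ has first coordinate $x$; and since $p_*$ conjugates the $\F^{[d]}$-action to the $\F^{[d]}_*$-action, $p_*$ restricts to a homeomorphism of $\overline{\F^{[d]}}(x^{[d]})$ onto $\overline{\F^{[d]}_*}(x^{[d]}_*)$, so in particular $\overline{\F^{[d]}}(y^{[d]})=\{y\}\times\overline{\F^{[d]}_*}(y^{[d]}_*)$ for each $y\in Z_{d-1}$, and by the definition of $\Q^{[d]}$ this set is contained in $\Q^{[d]}(Z_{d-1})$ (cf. also Lemma~\ref{pro1}). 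As $\pi^{[d]}$ intertwines the face-group actions and sends $x^{[d]}$ to $y^{[d]}$ with $y=\pi(x)$, continuity gives $\pi^{[d]}\big(\overline{\F^{[d]}}(x^{[d]})\big)\subseteq\overline{\F^{[d]}}(y^{[d]})$; applying $p_*$ shows $\pi^{[d]}_*({\bf w}_*)\in\overline{\F^{[d]}_*}(y^{[d]}_*)$ for every ${\bf w}\in\overline{\F^{[d]}}(x^{[d]})$, which (together with $w_{\bf 0}=x$) yields $\overline{\F^{[d]}}(x^{[d]})\subseteq\{x\}\times(\pi^{[d]}_*)^{-1}\overline{\F^{[d]}_*}(y^{[d]}_*)$.

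For the reverse inclusion I would restrict to $x\in X_0$, put $y=\pi(x)$, and argue as follows: given ${\bf z}=(x,{\bf z}_*)$ with $\pi^{[d]}_*({\bf z}_*)\in\overline{\F^{[d]}_*}(y^{[d]}_*)$, the point $\pi^{[d]}({\bf z})=(y,\pi^{[d]}_*({\bf z}_*))$ lies in $\{y\}\times\overline{\F^{[d]}_*}(y^{[d]}_*)=\overline{\F^{[d]}}(y^{[d]})\subseteq\Q^{[d]}(Z_{d-1})$, so by Proposition~\ref{prop-RIC} we get ${\bf z}\in\Q^{[d]}(X)$, and since $z_{\bf 0}=x$ this means ${\bf z}\in\Q^{[d]}[x]=\overline{\F^{[d]}}(x^{[d]})$; hence $\{x\}\times(\pi^{[d]}_*)^{-1}\overline{\F^{[d]}_*}(y^{[d]}_*)\subseteq\overline{\F^{[d]}}(x^{[d]})$. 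Combining the two inclusions gives the required equality for all $x$ in the dense $G_\d$ set $X_0$, which is precisely the statement that $Z_{d-1}$ is the topological cubic characteristic factor of order $d$. I do not expect a serious obstacle here once Proposition~\ref{prop-RIC} and Lemma~\ref{lem-AG} are in hand; the genuine content is concentrated in Lemma~\ref{lem-AG}, i.e. in the Baire-category argument (in the style of \cite{AG95,G94}) showing that on a residual set of base points the fibre $\Q^{[d]}[x]$ collapses onto the minimal subset $\overline{\F^{[d]}}(x^{[d]})$ it always contains, and that is the only step I would expect to demand real care.
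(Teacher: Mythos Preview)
Your proposal is correct and follows exactly the same approach as the paper: the paper's proof is the single line ``It follows from Proposition~\ref{prop-RIC} and Lemma~\ref{lem-AG},'' and you have simply written out the routine bookkeeping that makes this implication explicit. Your identification of where the real content lies (Lemma~\ref{lem-AG}) is also accurate.
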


\begin{proof}
It follows from Proposition \ref{prop-RIC} and Lemma \ref{lem-AG}.
\end{proof}

\begin{thm}\label{thm-main2}
Let $(X,T)$ be a minimal system and $d\in \N$. Let $\pi:(X,T)\rightarrow (Z_{d-1},T)$ be the factor map to the maximal $(d-1)$-step pro-nilfactor. Then there is a commutative diagram of homomorphisms of minimal flows
\begin{equation*}
  \xymatrix{
    X\ar[d]_{\pi}& X^\prime\ar[d]^{\pi^\prime}\ar[l]_{\theta^\prime} \\
    Z_{d-1}  &Y^\prime\ar[l]_{\theta}
    }
\end{equation*}
such that  $(Y',T)$ is the topological cubic
characteristic factor of order $d$ of $(X',T)$,
where $\theta,\theta^\prime$ are proximal extensions.
\end{thm}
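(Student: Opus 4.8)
The plan is to combine the RIC-reduction machinery (Theorem~\ref{RIC}) with Proposition~\ref{pro2}. By Theorem~\ref{RIC} applied to the factor map $\pi:(X,T)\rightarrow(Z_{d-1},T)$, there is an EGS-diagram
\begin{equation*}
  \xymatrix{
    X\ar[d]_{\pi}& X^\prime\ar[d]^{\pi^\prime}\ar[l]_{\theta^\prime} \\
    Z_{d-1}  &Y^\prime\ar[l]_{\theta}
    }
\end{equation*}
in which $\theta,\theta^\prime$ are proximal extensions and $\pi^\prime$ is RIC. So the diagram in the statement is exactly the EGS-diagram, and nothing needs to be constructed; the only point requiring argument is that $Y^\prime$ is, for this same $\pi^\prime$, the factor to the maximal $(d-1)$-step pro-nilfactor of $X^\prime$, since Proposition~\ref{pro2} presumes that hypothesis.

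First I would check that $Y^\prime=Z_{d-1}(X^\prime)$, i.e. that $R_{\pi^\prime}=\RP^{[d-1]}(X^\prime)$. This is precisely the content already extracted in the proof of Theorem~\ref{thm-main1}: using commutativity of the diagram, if $(x_1^\prime,x_2^\prime)\in R_{\pi^\prime}$ then $(\theta^\prime x_1^\prime,\theta^\prime x_2^\prime)\in R_\pi=\RP^{[d-1]}(X)$; pulling back via Theorem~\ref{th3}(1) (surjectivity of $\pi^\prime\times\pi^\prime$ on the regionally proximal relation of order $d-1$) and using that $\theta^\prime$ is proximal together with the fact that $\RP^{[d-1]}(X^\prime)$ is an equivalence relation (Theorem~\ref{thm-1}(2)), one gets $(x_1^\prime,x_2^\prime)\in\RP^{[d-1]}(X^\prime)$, hence $R_{\pi^\prime}\subseteq\RP^{[d-1]}(X^\prime)$. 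The reverse inclusion holds because $Y^\prime$, being a factor of $Z_{d-1}(X)$ and hence a system of order $d-1$, satisfies $\RP^{[d-1]}(X^\prime)\subseteq R_{\pi^\prime}$ by Theorem~\ref{th3}(2). Thus $\pi^\prime$ is the canonical map to the maximal $(d-1)$-step pro-nilfactor of $X^\prime$.

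Next, since $\pi^\prime$ is RIC and is the factor map to the maximal $(d-1)$-step pro-nilfactor of the minimal system $X^\prime$, Proposition~\ref{pro2} applies verbatim to $(X^\prime,T)$ and $\pi^\prime$: there is a dense $G_\delta$ set $X_0^\prime\subseteq X^\prime$ such that for every $x^\prime\in X_0^\prime$ with $y^\prime=\pi^\prime(x^\prime)$ one has $\overline{\F^{[d]}}((x^\prime)^{[d]})=\{x^\prime\}\times(\pi^{\prime[d]}_*)^{-1}\overline{\F^{[d]}_*}((y^\prime)^{[d]}_*)$. By Definition~\ref{de-characristic-factor} this says exactly that $(Y^\prime,T)$ is the topological cubic characteristic factor of order $d$ of $(X^\prime,T)$, which completes the proof.

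The only genuinely substantive point is the identification $Y^\prime = Z_{d-1}(X^\prime)$, and in particular the reverse inclusion $\RP^{[d-1]}(X^\prime)\subseteq R_{\pi^\prime}$; this is where one must invoke that EGS-extensions of a $(d-1)$-step pro-nilfactor are again $(d-1)$-step pro-nilfactors, which follows because pro-nilsystems of order $d-1$ form a class closed under factors and the property is detected by $\RP^{[d-1]}$. Everything else is bookkeeping: assembling the EGS-diagram and quoting Proposition~\ref{pro2}. I do not anticipate any calculation beyond what is already carried out in the proof of Theorem~\ref{thm-main1}.
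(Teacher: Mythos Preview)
Your overall strategy---build the EGS-diagram, verify the hypothesis needed for Proposition~\ref{pro2}, apply it---is exactly the paper's approach. However, there is a genuine error in your argument for the reverse inclusion $\RP^{[d-1]}(X')\subseteq R_{\pi'}$. You write that $Y'$ is ``a factor of $Z_{d-1}(X)$ and hence a system of order $d-1$,'' but the arrow $\theta:Y'\to Z_{d-1}$ goes the other way: $Y'$ is a proximal \emph{extension} of $Z_{d-1}$, not a factor. Proximal extensions of pro-nilsystems need not be pro-nilsystems (the Sturmian system over an irrational rotation is already a counterexample at the level $d=2$), so you cannot conclude that $Y'$ is a system of order $d-1$, and your justification for $\RP^{[d-1]}(X')\subseteq R_{\pi'}$ collapses. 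The assertion at the end that ``EGS-extensions of a $(d-1)$-step pro-nilfactor are again $(d-1)$-step pro-nilfactors'' is the same mistake restated.

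Fortunately the reverse inclusion is not needed. The paper explicitly remarks (just before Corollary~\ref{cube-distal}) that the proofs of Lemma~\ref{lem1} and Proposition~\ref{prop-RIC} use only the inclusion $R_\pi\subset\RP^{[d-1]}(X)$, not that the target is literally the maximal pro-nilfactor. Since the proof of Proposition~\ref{pro2} is just Proposition~\ref{prop-RIC} combined with Lemma~\ref{lem-AG}, the same weakening applies there. So once you have established $R_{\pi'}\subset\RP^{[d-1]}(X')$ (which you do correctly, following the argument of Theorem~\ref{thm-main1}; note the lift is via $\theta'\times\theta'$, not $\pi'\times\pi'$) and that $\pi'$ is RIC, the conclusion of Proposition~\ref{pro2} applies to $\pi':X'\to Y'$ and you are done. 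Delete the reverse-inclusion paragraph and your proof matches the paper's.
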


\begin{proof}
It follows from Theorem \ref{thm-main1} and Proposition \ref{pro2}.
\end{proof}

For weakly mixing systems, we have the following theorem, which was first proved in \cite{SY}.

\begin{cor}\cite{SY}
Let $(X,T)$ be a minimal weakly mixing system and $d\in \N$. Then
\begin{enumerate}
  \item $(\Q^{[d]}, \G^{[d]})$ is minimal and $\Q^{[d]}=X^{[d]}$.
  \item For all $x\in X$, $(\overline{\F^{[d]}}(x^{[d]}), \F^{[d]})$
  is minimal and $$\overline{\F^{[d]}}(x^{[d]})=\{x\}\times X^{[d]}_*=\{x\}\times
  X^{2^d-1}.$$
\end{enumerate}
\end{cor}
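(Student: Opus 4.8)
The plan is to reduce to the RIC situation and then invoke Proposition~\ref{prop-RIC} and Proposition~\ref{pro2}, exploiting that here the target $Z_{d-1}$ collapses to a single point.

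First I would note that $Z_{d-1}$ is the trivial system. Indeed $Z_{d-1}$, being a $(d-1)$-step pro-nilfactor, is an inverse limit of nilsystems and hence distal, while as a factor of the weakly mixing system $(X,T)$ it is weakly mixing; a minimal system that is both distal and weakly mixing must be trivial (in a distal minimal system every point of $X\times X$ is almost periodic, so the existence of a transitive point forces $X\times X$ to be minimal and therefore equal to $\Delta_X$). Consequently $\RP^{[d-1]}(X,T)=X\times X$ and the factor map $\pi\colon X\to Z_{d-1}$ is the constant map. This $\pi$ is RIC: it is trivially open, and for every $n$ the minimal points are dense in $R_\pi^n=X^n$ because minimal weakly mixing systems are incontractible (a standard fact; cf.~\cite{SY}).

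For part (1): since $\pi$ is RIC, Proposition~\ref{prop-RIC} gives $\Q^{[d]}(X)=(\pi^{[d]})^{-1}\Q^{[d]}(Z_{d-1})$, and because $Z_{d-1}$ is a single point, $\Q^{[d]}(Z_{d-1})$ is a point whose full preimage is all of $X^{[d]}$; hence $\Q^{[d]}(X)=X^{[d]}$. The minimality of $(\Q^{[d]}(X),\G^{[d]})$ holds for any minimal system (the fact recalled in \S2.3 from \cite{SY}), so $(X^{[d]},\G^{[d]})$ is minimal.

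For part (2): Proposition~\ref{pro2} (applicable since $\pi$ is RIC) yields a dense $G_\delta$ set $X_0\subseteq X$ with $\overline{\F^{[d]}}(x^{[d]})=\{x\}\times(\pi^{[d]}_*)^{-1}\overline{\F^{[d]}_*}(\pi(x)^{[d]}_*)$ for $x\in X_0$; since $Z_{d-1}$ is a point, the right-hand side is exactly $\{x\}\times X^{[d]}_*$. To upgrade this to \emph{every} $x\in X$, fix $x$: by part (1) we have $\Q^{[d]}[x]=\{x\}\times X^{[d]}_*$; by Lemma~\ref{pro1} this set has a unique $\F^{[d]}$-minimal subset, namely $\overline{\F^{[d]}}(x^{[d]})$, which is genuinely minimal by the \S2.3 facts; and since $\overline{\F^{[d]}}(x^{[d]})\subseteq\{x\}\times X^{[d]}_*$ always holds, equality follows once one knows that $(\{x\}\times X^{[d]}_*,\F^{[d]})$ — equivalently $(X^{[d]}_*,\F^{[d]}_*)$ — is itself minimal. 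I expect this last assertion to be the main obstacle: it does not follow formally from the $\G^{[d]}$-minimality established in part (1), since for $d\ge 2$ the group $\F^{[d]}_*$ is a proper subgroup of $\G^{[d]}_*$, and one must use weak mixing directly, checking as in \cite{SY} that for a minimal weakly mixing system the face group acts minimally on $X^{2^d-1}$.
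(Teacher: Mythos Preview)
The paper itself provides no proof of this corollary; it merely cites \cite{SY}. Your strategy of deducing it from the paper's own machinery (Proposition~\ref{prop-RIC} and Proposition~\ref{pro2}) is natural and essentially correct, but the ``main obstacle'' you flag at the end is illusory: you have already overcome it. Once Proposition~\ref{pro2} gives you even a single $x_0\in X_0$ with $\overline{\F^{[d]}}(x_0^{[d]})=\{x_0\}\times X^{[d]}_*$, the $\F^{[d]}$-minimality of this set (recalled in \S2.3) yields the minimality of $(X^{[d]}_*,\F^{[d]}_*)$ directly, because the $\F^{[d]}$-action on $\{x_0\}\times X^{[d]}_*$ is conjugate via $p_*$ to the $\F^{[d]}_*$-action on $X^{[d]}_*$. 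Then for an arbitrary $x\in X$ the set $\{x\}\times X^{[d]}_*$ is $\F^{[d]}$-minimal by the same conjugacy, and since it contains the nonempty closed invariant set $\overline{\F^{[d]}}(x^{[d]})$, the two coincide. No separate weak-mixing argument is needed at this point; the upgrade from $X_0$ to all of $X$ is purely formal.

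The one place where weak mixing really enters beyond ``$Z_{d-1}$ is trivial'' is your RIC claim, i.e.\ that $T^{(n)}$-minimal points are dense in $X^n$. This is indeed known for minimal weakly mixing systems, but it is not proved in the present paper, so if you want a self-contained derivation you should either supply a short argument or give a precise reference rather than a bare ``cf.~\cite{SY}''.
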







\end{document}